\newtheorem{theorem}{Theorem}
\newtheorem{lemma}[theorem]{Lemma}
\newtheorem{corollary}[theorem]{Corollary}
\newtheorem{oldtheorem}{Theorem}
\newtheorem{oldlemma}[oldtheorem]{Lemma}
\theoremstyle{definition}
\theoremstyle{remark}
\newtheorem{remark}{Remark}
\newtheorem{example}{Example}
\newcommand{\set}[1]{\left\{#1\right\}}
\newcommand{\abs}[1]{\lvert#1\rvert}
\newcommand{\nm}[1]{\lVert#1\rVert}
\newcommand{\bnm}[1]{\big\lVert#1\big\rVert}
\newcommand{\D}{\mathbb{D}}
\newcommand{\N}{\mathbb{N}}
\newcommand{\Z}{\mathbb{Z}}
\newcommand{\C}{\mathbb{C}}
\renewcommand{\phi}{\varphi}
\newcommand{\BMOA}{\rm BMOA}
\begin{document}

\title[On non-normal solutions of linear differential equations]{On non-normal solutions of linear\\ differential equations}

\author{Janne Gr\"ohn}
\thanks{The author is supported in part by the Academy of Finland, projects \#258125 and \#286877.}
\address{Department of Physics and Mathematics, 
University of Eastern Finland,\newline \indent P.O. Box 111, FI-80101 Joensuu, Finland}
\email{janne.grohn@uef.fi}

\date{\today}

%%%%%%%%%%%%%%%%%%%%%%%%
%%%% ---- ABSTRACT ---- %%%%
%%%%%%%%%%%%%%%%%%%%%%%%

\begin{abstract} 
Normality arguments are applied to study the oscillation of solutions of
$f''+Af=0$, where the coefficient $A$ is analytic
in the unit disc $\D$ and  $\sup_{z\in\D} (1-|z|^2)^2|A(z)| < \infty$.
It is shown that such differential equation may admit a non-normal solution
having prescribed uniformly separated zeros.
\end{abstract}

\subjclass[2010]{Primary 34C10}
\keywords{Linear differential equation; normal function; oscillation theory;\newline\indent prescribed zeros; separation of zeros.}

\maketitle

%%%%%%%%%%%%%%%%%%%%%%%
%%%% ---- SECTION ---- %%%%
%%%%%%%%%%%%%%%%%%%%%%%

\section{Introduction} \label{sec:introduction}

The purpose of this paper is to consider the oscillation of solutions of 
\begin{equation} \label{eq:de2}
f'' + A f = 0,
\end{equation}
where the coefficient $A$ is analytic in the unit disc $\D$. Due to an extensive existing literature on
the subject, zero-sequences of individual solutions of \eqref{eq:de2}
can be described in various ways. However, it is curious how little is known of 
the geometric zero distribution of the product of two linearly independent solutions of \eqref{eq:de2}.

Let $f_1$ and $f_2$ be linearly independent solutions of \eqref{eq:de2}. By \eqref{eq:de2},
the Wronskian determinant  $W(f_1,f_2) = f_1 f_2' - f_1'f_2$
is a non-zero complex constant. 
We deduce:
\begin{itemize}
\item[\rm (i)]
Zeros of $f_1$ and $f_2$ are simple.
\item[\rm (ii)]
Zeros of $f_1f_2$ are simple.
\end{itemize}
Our plan is to elaborate these conclusions. Concerning (i), we focus on the separation
between zeros of a non-trivial solution and zeros of its derivative. Discussion of (ii) leads us to the
concept of normality (in the sense of Lehto and Virtanen). As a~main result, we construct a coefficient $A\in H^\infty_2$ such that
\eqref{eq:de2} admits a non-normal solution having prescribed uniformly separated zeros. 
Here $H^\infty_2$ is the space of those analytic functions $g$
in $\D$ for which
\begin{equation*}
  \nm{g}_{H^\infty_2} = \sup_{z\in\D} \, (1-|z|^2)^2 |g(z)| <\infty.
\end{equation*}
Finally, we consider the normality of $w=f_1/f_2$, and obtain an estimate for the 
separation of zeros of $f_1f_2$ in the case $A\in H^\infty_2$.

%%%%%%%%%%%%%%%%%%%%%%%
%%%% ---- SECTION ---- %%%%
%%%%%%%%%%%%%%%%%%%%%%%

\section{Separation of critical points}

Our intention is to discuss the extent to which the separation properties of zeros of
solutions of \eqref{eq:de2}
hold true for their critical points. The critical points of an~analytic function $f$
are the zeros of the derivative $f'$. In this paper, separation always refers to
the separation with respect to the hyperbolic metric.

Let $f$ be a non-trivial ($f\not\equiv 0)$ solution of \eqref{eq:de2} in the unit disc $\D$. We may ask
the following questions, and consider their relation to the growth of the coefficient.
\begin{enumerate}
\item[(Q1)] Are the zeros of $f$ separated?
\item[(Q2)] Are the critical points of $f$ separated?
\item[(Q3)] Are the zeros of $f$ separated from the critical points of $f$?
\end{enumerate}

We proceed to consider (Q1)-(Q3) under certain restrictions for the growth of the coefficient
$A$. If $\psi:[0,1)\to (0,1)$ is a non-increasing function such that
\begin{equation} \label{eq:smoothness}
K = \sup_{0\leq r < 1} \, \frac{\psi(r)}{\psi\left( \frac{r+\psi(r)}{1+r\psi(r)} \right)} < \infty,
\end{equation}
and $A$ is an analytic function satisfying
\begin{equation} \label{eq:grest}
\sup_{z\in\D} \, \abs{A(z)} \big( \psi(\abs{z}) (1-\abs{z}^2) \big)^2 = M < \infty,
\end{equation}
then (Q1) admits a complete answer according to 
\cite[Theorem~11]{CGHR:2013}. In particular, these assumptions
imply that any distinct  zeros $\zeta_1,\zeta_2\in\D$ of any non-trivial solution $f$ of \eqref{eq:de2}
are separated in the hyperbolic metric by
\begin{equation*}
\varrho_h(\zeta_1,\zeta_2) 
      \geq \log \frac{1+\psi\big( | t_h(\zeta_1,\zeta_2)|\big)/\max\{ K\sqrt{M}, 1\}}
      {1-\psi\big(| t_h(\zeta_1,\zeta_2)| \big)/\max\{ K\sqrt{M}, 1\}},
\end{equation*}
and vice versa. Here $t_h(\zeta_1,\zeta_2)$ is the hyperbolic mid-point of $\zeta_1$ and $\zeta_2$;
\begin{equation*}
\varrho_h(\zeta_1,\zeta_2)= \frac{1}{2}\log\frac{1+\varrho_p(\zeta_1,\zeta_2)}{1-\varrho_p(\zeta_1,\zeta_2)}
\quad \text{and} \quad
\varrho_p(\zeta_1,\zeta_2)= \abs{\varphi_{\zeta_1}(\zeta_2)}
\end{equation*}
are the hyperbolic and the pseudo-hyperbolic distances between $\zeta_1$ and $\zeta_2$;
and $\varphi_a(z)=(a-z)/(1-\overline{a}z)$, $a\in\D$, 
is an automorphism of $\D$, which coincides with its own inverse.
We refer to \cite[Section~2.3]{CGHR:2013} for a~detailed study of the smoothness condition~\eqref{eq:smoothness}.
The separation result above is an extension of the classical findings \cite[Theorem~3-4]{S:1955} by B.~Schwarz:
if $A\in H^\infty_2$  then the hyperbolic distance between any distinct zeros of any non-trivial 
solution of \eqref{eq:de2} is uniformly bounded away from zero by a~constant depending on $\nm{A}_{H^\infty_2}$, 
and vice versa.

The question (Q2) admits an immediate negative answer, which is independent of the growth of the coefficient. 
For example, \eqref{eq:de2}
for $A(z)=-6z/(z^3+2)$ admits a solution $f(z)=z^3 +2$, whose derivative has
a~two-fold zero at the origin. 
Even more is true. The following example proves that, if $A\in H^\infty_2$ then 
zeros of the derivative of a solution of \eqref{eq:de2} can have arbitrarily high multiplicity. 
Moreover, there is no lower bound even for the separation of \emph{distinct} critical points.

%%%%%%%%%%%%%%%%%%%%%%%
%%%% ---- EXAMPLE ---- %%%%
%%%%%%%%%%%%%%%%%%%%%%%

\begin{example}
Let $\{\zeta_n\}_{n=1}^\infty \subset \D$ be a Blaschke-sequence, i.e. $\sum_{n=1}^\infty(1-|\zeta_n|)<\infty$, 
and consider the Blaschke product
\begin{equation} \label{eq:Blaschke}
B(z) = \prod_{n=1}^\infty \frac{|\zeta_n|}{\zeta_n} \, \frac{\zeta_n-z}{1-\overline{\zeta}_n z}, 
\quad z\in\D.
\end{equation}
Here we take the convention that $|\zeta_n|/\zeta_n=1$ for $\zeta_n=0$.
Now $f(z) = 2/( B(z)+2 )$ is a bounded solution of \eqref{eq:de2} with
\begin{equation*}
A(z) = \frac{2 B''(z) + B''(z)B(z) - 2 \big( B'(z) \big)^2}{\big( B(z)+2 \big)^2}, \quad z\in\D.
\end{equation*}
Since $B$ is bounded, we have 
$A\in H^\infty_2$. The same construction was also used in the proof of 
\cite[Theorem~8]{GH:2012}.  Since $f'(z) = -2 B'(z)/( B(z)+2)^2$, we deduce:

(i) If $\{ \zeta_n \}_{n=1}^\infty$ is a Blaschke-sequence such that for each $N\in\N$ there exists a~point whose multiplicity is
greater than $N$, then $f'$ has zeros of arbitrarily high multiplicity.

(ii)
If $\{ \zeta_n \}_{n=1}^\infty$ is a Blaschke-sequence, which contains two subsequences of two-fold points whose pair-wise 
separation becomes arbitrarily small near the boundary $\partial\D$, then the distinct critical points of $f$ need not to
obey any pre-given separation. 
\phantom{a}\hfill $\diamond$
\end{example}

The classical result \cite[Theorem~8.2.2]{H:1997} due to C.-T. Taam,
whose proof is based on Sturm's comparison theorem,
implies a positive answer to the question (Q3). 
We take the opportunity to state a parallel result with an alternative proof.
Our method also produces an estimate for the behavior of solutions
near critical points.

%%%%%%%%%%%%%%%%%%%%%%%%
%%%% ---- THEOREM ---- %%%%
%%%%%%%%%%%%%%%%%%%%%%%%

\begin{theorem} \label{thm:main}
Let $A$ be analytic in $\D$, and let $\psi : [0,1) \to (0,1)$ be a non-increasing function
such that \eqref{eq:smoothness} holds. If $A$ satisfies \eqref{eq:grest}, 
then the hyperbolic distance between any zero $\zeta\in\D$ and any critical point $a\in\D$
of any non-trivial solution $f$ of \eqref{eq:de2} satisfies
\begin{equation} \label{eq:estimate}
\varrho_h(\zeta,a) \geq \frac{1}{2} \log\frac{1+\psi(|a|)/\max\{K\sqrt{M},1\}}{1-\psi(|a|)/\max\{K\sqrt{M},1\}}.
\end{equation}
\end{theorem}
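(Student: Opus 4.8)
The plan is to move the critical point to the origin by a conformal change of variable and then estimate the solution on a disc around it. Since the Wronskian of two independent solutions is a nonzero constant, zeros of $f$ are simple, so $f'(a)=0$ forces $f(a)\neq 0$; rescaling, assume $f(a)=1$. Put $g=(f\circ\varphi_a)\cdot(\varphi_a')^{-1/2}$, which is well defined because $\varphi_a'$ is zero-free on the simply connected $\D$. As $\varphi_a$ is a Möbius map, $S\varphi_a\equiv 0$, and a direct computation gives $g''+Bg=0$ with $B=(A\circ\varphi_a)\,(\varphi_a')^{2}$. Two facts come out of this conjugation: first, $g$ vanishes exactly at the $\varphi_a$-preimages of the zeros of $f$, so $f(\zeta)=0$ corresponds to a zero of $g$ of modulus $\varrho_p(\zeta,a)$; second, differentiating $(\varphi_a')^{1/2}g=f\circ\varphi_a$ at $0$ and using $\varphi_a(0)=a$, $\varphi_a''(0)/\varphi_a'(0)=2\bar a$ yields $g'(0)=-\bar a\,g(0)$, while $g(0)=0$ would force $g\equiv0$; hence we may take $g(0)=1$, $g'(0)=-\bar a$, $|g'(0)|<1$.

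Next I would bound $B$ near the origin with the help of \eqref{eq:grest} and \eqref{eq:smoothness}. From $|\varphi_a'(z)|(1-|z|^2)=1-|\varphi_a(z)|^2$ together with \eqref{eq:grest}, $|B(z)|\le M\,\psi(|\varphi_a(z)|)^{-2}(1-|z|^2)^{-2}$. Setting $\delta=\psi(|a|)/\max\{K\sqrt M,1\}\le\psi(|a|)$, for $|z|<\delta$ the point $\varphi_a(z)$ lies within pseudo-hyperbolic distance $\psi(|a|)$ of $a$, so $|\varphi_a(z)|\le(|a|+\psi(|a|))/(1+|a|\psi(|a|))$ and \eqref{eq:smoothness} gives $\psi(|\varphi_a(z)|)\ge\psi(|a|)/K$; hence $|B(z)|\le MK^2\psi(|a|)^{-2}(1-|z|^2)^{-2}$ on $\{|z|<\delta\}$. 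Since $MK^2\le\max\{K\sqrt M,1\}^{2}=\psi(|a|)^2/\delta^2$, this reads $|B(z)|\le \delta^{-2}(1-|z|^2)^{-2}$ there.

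It then remains to show that a solution $g$ of $g''+Bg=0$ with $g(0)=1$, $|g'(0)|<1$ and $|B(z)|\le\delta^{-2}(1-|z|^2)^{-2}$ on $\{|z|<\delta\}$ cannot vanish in $\{|z|<\delta\}$; transporting this back through $\varphi_a$ gives $\varrho_p(\zeta,a)\ge\delta$, which is \eqref{eq:estimate}. To do this I would write $g(z)=1+g'(0)z-\int_0^z(z-s)B(s)g(s)\,ds$ along the segment $[0,z]$ and compare $|g|$ on this segment with the solution of the radial majorant equation $u''=\delta^{-2}(1-t^2)^{-2}u$; the substitution $s=\tfrac12\log\frac{1+t}{1-t}$ turns the majorant into a constant-coefficient equation, so its solutions and zeros are explicit and reproduce precisely the $\tfrac12\log\frac{1+\,\cdot\,}{1-\,\cdot\,}$ expression appearing in \eqref{eq:estimate}; this comparison step should also furnish the advertised quantitative lower bound for $|f|$ near $a$. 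The delicate part is sharpness: a crude Gronwall estimate loses the constant, so the comparison must be carried out keeping the initial data $g(0)=1$, $g'(0)=-\bar a$ (it is the inequality $|a|<1$ that is used, not $g'(0)=0$), and it is exactly the interplay of $M$, $K$, and this normalization that must be tracked to land on the displayed constant. Finally, the role of $\max\{K\sqrt M,1\}$ is that when the coefficient is so small that $\psi(|a|)/\max\{K\sqrt M,1\}$ would exceed $\psi(|a|)$, the bound on $B$ above is unavailable and one reverts to the trivial estimate on the ball $B_p(a,\psi(|a|))$.
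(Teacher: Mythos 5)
The reduction to the origin is where the argument breaks. Writing $g=(f\circ\varphi_a)(\varphi_a')^{-1/2}$ does give $g''+Bg=0$ with $B=(A\circ\varphi_a)(\varphi_a')^2$, and your bound $|B(z)|\le\delta^{-2}(1-|z|^2)^{-2}$ on $|z|<\delta$ is correct; but the factor $(\varphi_a')^{-1/2}$ destroys the critical point. The hypothesis $f'(a)=0$ becomes $g'(0)=-\bar a\,g(0)$, not $g'(0)=0$, and the only trace of it left in your comparison is $|g'(0)|=|a|<1$. The non-vanishing lemma you then rely on --- that $g(0)=1$, $|g'(0)|<1$ and $|B(z)|\le\delta^{-2}(1-|z|^2)^{-2}$ on $\{|z|<\delta\}$ force $g\neq0$ there --- is false. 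Take $B\equiv\delta^{-2}$ (admissible, since $(1-|z|^2)^{-2}\ge1$) and $g'(0)=-c$ with $0<c<1$; then
\begin{equation*}
g(t)=\cos(t/\delta)-c\delta\,\sin(t/\delta)
\end{equation*}
solves the equation with the right data and vanishes at $t_0=\delta\arctan\bigl(1/(c\delta)\bigr)$, which is smaller than $\delta$ as soon as $c\delta>\cot 1\approx 0.64$. This range is attainable in your setting: take $\psi\equiv 0.9$ (so $K=1$), $M=1$, $|a|=0.9$, giving $c=\delta=0.9$. So the inequality $|a|<1$ alone cannot carry the argument; the fact that $a$ is a critical point, and not just any point, has been essentially discarded by the normalization.

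This is exactly the difficulty the paper's proof is designed to avoid. Instead of conjugating the solution, it passes to the quotient $w=f^\star/f$ of two independent solutions, for which $S_w=2A$, $w'=1/f^2$ and $w''=-2f'/f^3$, so that ``$a$ is a critical point of $f$'' becomes the condition $w''(a)=0$ --- a condition that can be renormalized to the origin because one is free to postcompose $w$ with a M\"obius map of the target. Precomposing with $\varphi_a(\psi(|a|)rz)$ and postcomposing with $z\mapsto 1/(z-C_a)$ for a suitable constant $C_a$ yields $g_a$ with $g_a''(0)=0$ and $(1-|z|^2)^2|S_{g_a}(z)|\le1$, to which Lemma~\ref{lemma:lehto} applies; the resulting decay of $g_a'$ translates back into the lower bound for $|f|^2=1/|w'|$ on the pseudo-hyperbolic disc. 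If you prefer to stay with a real-variable comparison, the classical route (Taam; \cite[Theorem~8.2.2]{H:1997}) keeps the first-order term, i.e.\ works with $f\circ\gamma$ along a geodesic \emph{without} the $(\varphi_a')^{-1/2}$ factor, so that the vanishing of the derivative at the critical point is preserved and Sturm's comparison theorem can be brought to bear.
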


Recall that
\begin{equation*}
S_g = \left( \frac{g''}{g'} \right)^\prime - \frac{1}{2} \left( \frac{g''}{g'} \right)^2
\end{equation*}
is the Schwarzian derivative of the meromorphic function $g$.

%%%%%%%%%%%%%%%%%%%%%%%%%%
%%%% ---- OLDLEMMA ---- %%%%
%%%%%%%%%%%%%%%%%%%%%%%%%%

\begin{oldlemma}[\protect{\cite[p.~91]{L:1987}}] \label{lemma:lehto}
Let $g$ be meromorphic in $\D$ and satisfy $g''(0)=0$, $\nm{S_g}_{H^\infty_2} \leq 2$
and $(1-\abs{z}^2)^2 \abs{S_g(z)} \leq 1$ in some neighborhood $\abs{z}\leq \rho < 1$ of the origin.
Then $g$ is analytic in $\D$, and
\begin{equation*}
\abs{g'(z)} \leq \frac{S\abs{g'(0)}}{1-\abs{z}^2} \left( \log\frac{1+\abs{z}}{1-\abs{z}} \right)^{-2},
\quad z\in\D,
\end{equation*}
where $S=S(\rho)$ is a constant such that $0<S<\infty$.
\end{oldlemma}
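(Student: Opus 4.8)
The plan is to reduce the statement to a real–variable comparison argument for the equation $u''+Au=0$ along radii, where $A=S_g/2$. First I would observe that the finiteness of $\nm{S_g}_{H^\infty_2}$ forces $S_g$ to be analytic in $\D$: a pole of $S_g$ at an interior point $z_0$ would make $(1-\abs{z}^2)^2\abs{S_g(z)}$ blow up near $z_0$, contradicting the hypothesis. Hence $A=S_g/2$ is analytic with $\abs{A(z)}\leq (1-\abs{z}^2)^{-2}$ throughout $\D$ and $\abs{A(z)}\leq \frac{1}{2}(1-\abs{z}^2)^{-2}$ for $\abs{z}\leq\rho$. Writing $u=(g')^{-1/2}$ with a fixed branch, a direct computation gives $S_g=-2u''/u$, so $u$ solves $u''+Au=0$; moreover $g''(0)=0$ translates, via $g''/g'=-2u'/u$, into $u'(0)=0$. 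After scaling we may take $u(0)=1$, so that $\abs{g'(z)}=\abs{g'(0)}\,\abs{u(z)}^{-2}$. Thus the assertion is equivalent to the lower bound $\abs{u(z)}\geq c\,\sqrt{1-\abs{z}^2}\,\log\frac{1+\abs{z}}{1-\abs{z}}$ for a constant $c=c(\rho)>0$, together with the claim that $u$ is zero-free, which is exactly what upgrades $g$ from meromorphic to analytic.

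Next I would fix a radius $z=re^{i\theta}$ and set $v(t)=u(te^{i\theta})$, so that $v''+Bv=0$ with $\abs{B(t)}=\abs{A(te^{i\theta})}$, $v(0)=1$, and $v'(0)=0$. Working with $m(t)=\abs{v(t)}$ rather than $\abs{v}^2$ is the point on which the correct constant hinges: wherever $v\neq0$, the identity $2mm'=(\abs{v}^2)'$ together with the Cauchy--Schwarz inequality $\abs{\Real(v'\overline{v})}\leq\abs{v'}\,\abs{v}$ yields the differential inequality $m''\geq -\abs{B}\,m\geq -(1-t^2)^{-2}m$, i.e. $m''+c\,m\geq0$ with $c(t)=(1-t^2)^{-2}$. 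The relevant model is $q''+c\,q=0$, whose solutions are $q_1(t)=\sqrt{1-t^2}$ and $q_2(t)=\sqrt{1-t^2}\,\log\frac{1+t}{1-t}$; the target lower bound is precisely the dominant solution $q_2$.

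Then I would run a Wronskian comparison. Since $\frac{d}{dt}(m'q_1-mq_1')=(m''+cm)q_1\geq0$ and $m'(0)q_1(0)-m(0)q_1'(0)=0$, the ratio $s=m/q_1$ is non-decreasing with $s(0)=1$; in particular $m\geq q_1>0$ on $[0,1)$, which by continuation (were $m$ to vanish first at $t_0$, the comparison on $[0,t_0)$ would already force $m(t_0)\geq q_1(t_0)>0$) shows $v$, hence $u$, is zero-free, giving the analyticity of $g$. The logarithmic gain comes entirely from the neighborhood hypothesis: rewriting the inequality as $(s'q_1^2)'=(m''+cm)q_1\geq0$ and using $\abs{B}\leq\frac{1}{2}(1-t^2)^{-2}$ with $s\geq1$ on $[0,\rho]$ gives $(s'q_1^2)'\geq \frac{1}{2}(1-t^2)^{-1}$ there, so integration yields $s'(\rho)(1-\rho^2)\geq \frac{1}{4}\log\frac{1+\rho}{1-\rho}=:c_0>0$. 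As $s'q_1^2=s'(1-t^2)$ is non-decreasing, $s'(t)\geq c_0(1-t^2)^{-1}$ for $t\geq\rho$, and a final integration produces $s(r)\geq c_1\log\frac{1+r}{1-r}$ for all $r\in[0,1)$ (using $s\geq1$ to absorb small $r$). Hence $\abs{u(z)}=m(\abs{z})\geq c_1\sqrt{1-\abs{z}^2}\,\log\frac{1+\abs{z}}{1-\abs{z}}$, which gives the stated estimate with $S=S(\rho)=c_1^{-2}$.

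The delicate points, and where I expect the real work to lie, are twofold. First, the coefficient $B$ is genuinely complex, so ordinary Sturm comparison does not apply; the modulus inequality for $m=\abs{v}$ must be derived with care, and it is essential to work with $\abs{v}$ rather than $\abs{v}^2$, since the latter produces the coefficient $2(1-t^2)^{-2}$ and loses the sharp model. Second, the plain global bound only yields $m\geq\sqrt{1-t^2}$, so the entire logarithmic growth, and therefore the dependence $S=S(\rho)$, must be extracted from the strict improvement $\abs{B}\leq\frac{1}{2}(1-t^2)^{-2}$ available near the origin; correctly converting this local gain into the integrated bound, while simultaneously closing the continuation argument that rules out zeros of $u$, is the crux of the proof.
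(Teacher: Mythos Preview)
The paper does not supply a proof of this lemma; it is quoted verbatim from Lehto's book and used as a black box in the proof of Theorem~\ref{thm:main}. So there is no in-paper argument to compare against.

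Your proposal is correct and is essentially the classical route. The reduction to the linear equation $u''+Au=0$ with $A=S_g/2$ via $g'=1/u^2$, the radial restriction $v(t)=u(te^{i\theta})$, the polar-decomposition inequality $m''\geq -|B|\,m$ for $m=|v|$, and the Sturm--type Wronskian comparison with the extremal equation $q''+(1-t^2)^{-2}q=0$ (solutions $q_1=\sqrt{1-t^2}$ and $q_2=q_1\log\tfrac{1+t}{1-t}$) are exactly the expected ingredients. The two delicate points you single out are the right ones, and you handle them correctly: working with $|v|$ rather than $|v|^2$ is what produces the sharp comparison coefficient $(1-t^2)^{-2}$, and the logarithmic gain is extracted precisely from the strict improvement $|B|\leq\tfrac12(1-t^2)^{-2}$ on $[0,\rho]$ via the bootstrap $(s'q_1^2)'\geq\tfrac12(1-t^2)^{-1}$ after one already knows $s\geq 1$.

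One cosmetic point: since $g$ is only assumed meromorphic, writing $u=(g')^{-1/2}$ up front invites branch and pole worries. It is cleaner to \emph{define} $u$ as the unique solution of $u''+Au=0$ with $u(0)=g'(0)^{-1/2}$ and $u'(0)=0$ (analytic in all of $\D$ because $A$ is), observe that $g=u_1/u+{\rm const}$ for an independent solution $u_1$, and then read off that poles of $g$ coincide with zeros of $u$. Your zero-free conclusion $m\geq q_1>0$ then gives the analyticity of $g$ without any continuation subtleties, and the rest of the argument goes through unchanged.
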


%%%%%%%%%%%%%%%%%%%%%%
%%%% ---- PROOF ---- %%%%
%%%%%%%%%%%%%%%%%%%%%%

\begin{proof}[Proof of Theorem~\ref{thm:main}]
Let $f$ be a solution of \eqref{eq:de2}, where
the coefficient $A$ satisfies~\eqref{eq:grest}. Let $f^\star$ be a solution of \eqref{eq:grest},
linearly independent to $f$, such that $W(f,f^\star)=1$. If we define $w=f^\star/f$, then
\begin{equation*}
S_w = 2A, \quad w' = \frac{1}{f^2} \quad \text{and} \quad
w'' = - 2 \, \frac{f'}{f^3}.
\end{equation*}
We conclude that $w''(z)=0$ if and only if $z\in\D$ is a critical point of $f$. Note that~$f$ 
does not vanish at the critical points.

If $f$ does not have any critical points in $\D$, then there is nothing to prove.
Let $a\in\D$ be a critical point of $f$, and consider two separate cases.

\subsubsection*{Case $a=0$}
Define the meromorphic function $g$ in $\D$ by $g(z) = w \big( \psi(0)rz \big)$,
where $r=1/\max\{ K \sqrt{2M}, 1\}$.
Since $w''(0)=0$, we conclude that  $g''(0)=0$. Now
\begin{align*}
  (1-\abs{z}^2)^2 |S_{g}(z)| & = 
  (1-\abs{z}^2)^2 \big| S_w \big( \psi(0) rz \big) \big| 
  \, \psi(0)^2 r^2\\
  & \leq 2M \bigg( \frac{\psi(0)}{\psi\big( \psi(0) \big)}\bigg)^2 r^2 \leq 1,
  \quad z\in\D.
\end{align*}
By Lemma~\ref{lemma:lehto} we conclude that $g$ is analytic in $\D$, which means that
$w$ does not have any poles in the pseudo-hyperbolic disc $\Delta_p(0,\psi(0)r)$, and
\begin{equation*}
\big| w'\big( \psi(0)rz \big)\big| \, \psi(0)r 
 \leq \frac{S |w'(0)| \psi(0) r}{1-\abs{z}^2}\, \left( \log\frac{1+\abs{z}}{1-\abs{z}} \right)^{-2},
\quad z\in\D.
\end{equation*}
Since $f^2 = 1/w'$, we deduce
\begin{align*}
\frac{\big| f\big( \psi(0)rz)\big) \big|^2}{|f(0)|^2} %& = \frac{1}{\big| w'\big(\phi_a(\psi(|a|)rz)\big) \big|}\\
 & \geq \frac{1}{S}\,  (1-\abs{z}^2) \left( \log\frac{1+\abs{z}}{1-\abs{z}} \right)^{2},
\quad z\in\D,
\end{align*}
and hence $f$ has no zeros in $\Delta(0,\psi(0)r)$.

\subsubsection*{Case $a\neq 0$}
Define the meromorphic function 
\begin{equation*}
g_a(z) = \frac{1}{w \big(\varphi_a(\psi(|a|)rz) \big)-C_a}, \quad z\in\D,
\end{equation*}
where $r=1/\max\{ K \sqrt{2M}, 1\}$, and $C_a = w(a)- w'(a)(1-\abs{a}^2)/\overline{a}$ 
is a complex constant. Note that $C_a\neq w(a)$, since $w'(a)\neq 0$. Furthermore,
the choice of $C_a$ yields $g_a''(0)=0$. We obtain
\begin{align*}
  (1-\abs{z}^2)^2 |S_{g_a}(z)| & = 
  (1-\abs{z}^2)^2 \big| S_w \big( \phi_a( \psi(|a|) rz) \big) \big| 
  \, \big| \phi_a'\big( \psi(|a|) rz \big) \big|^2 \psi(|a|)^2 r^2\\
  & \leq 2M \left( \frac{\psi(|a|)}{\psi\Big( \frac{|a|+\psi(|a|)}{1+|a| \psi(|a|)}\Big)}\right)^2 r^2 \leq 1,
  \quad z\in\D.
\end{align*}
By Lemma~\ref{lemma:lehto} we conclude that $g_a$ is analytic in $\D$, which means that
$w$ does not attain the value $C_a$ in the pseudo-hyperbolic disc $\Delta_p(a,\psi(|a|)r)$, and further
\begin{align*}
& \frac{\big| w'\big(\phi_a(\psi(|a|)rz)\big) \big| \big| \phi_a'(\psi(|a|)rz)\big| \psi(|a|)r}{\big| w\big( \phi_a(\psi(|a|)rz) \big)-C_a\big| ^2}\\
& \qquad \leq \frac{S_a |w'(a)| (1-|a|^2) \psi(|a|) r}{|w(a)-C_a|^2} \, \frac{1}{1-\abs{z}^2} \left( \log\frac{1+\abs{z}}{1-\abs{z}} \right)^{-2},
\quad z\in\D.
\end{align*}
Since $f^2 = 1/w'$, we deduce
\begin{align*}
\frac{\big| f\big(\phi_a(\psi(|a|)rz)\big) \big|^2}{|f(a)|^2} %& = \frac{1}{\big| w'\big(\phi_a(\psi(|a|)rz)\big) \big|}\\
 & \geq \frac{1}{S_a} \, \frac{|w(a)-C_a|^2}{\big| w\big( \phi_a(\psi(|a|)rz) \big)-C_a\big| ^2}
 \, \frac{\big| \phi_a'(\psi(|a|)rz)\big|}{1-|a|^2 } \\ 
& \qquad \times  (1-\abs{z}^2) \left( \log\frac{1+\abs{z}}{1-\abs{z}} \right)^{2}, \quad z\in\D,
\end{align*}
and hence $f$ has no zeros in $\Delta_p(a,\psi(|a|)r)$. The claim follows.
\end{proof}

The assertion converse to Theorem~\ref{thm:main} is false. If $f$ is an analytic non-vanishing function, 
then zeros and critical points of $f$ are trivially separated from each other. However, regardless
of the existence of the zero-free solution, the coefficient $A$ can grow arbitrarily fast.
This follows easily by considering compositions of the exponential function, for example.

%%%%%%%%%%%%%%%%%%%%%%%
%%%% ---- REMARK ---- %%%%
%%%%%%%%%%%%%%%%%%%%%%%

\begin{remark}
An estimate for the separation between zeros and critical points, which turns out to be weaker than \eqref{eq:estimate}, 
is immediately available by using the theory of $\varphi$-normal functions, since
\begin{equation*}
\left( \frac{f'}{f} \right)^{\#} = \frac{\big| (f'/f)'(z) \big|}{1+\big| (f'/f)(z) \big|^2}
       \leq \frac{\big| f''(z) f(z) \big| + \big|f'(z)\big|^2}{\big| f(z) \big|^2 + \big|f'(z)\big|^2 }
       \leq \big| A(z) \big| + 1, \quad z\in\D.
\end{equation*}
See \cite[Theorem~4]{AR:2011}. Normal functions are considered further in Section~\ref{sec:normal}.
\end{remark}

Note the following  special case of Theorem~\ref{thm:main} (or \cite[Theorem~8.2.2]{H:1997}).

%%%%%%%%%%%%%%%%%%%%%%%%%%
%%%% ---- COROLLARY ---- %%%%
%%%%%%%%%%%%%%%%%%%%%%%%%%

\begin{corollary}
If $A\in H^\infty_2$, then 
the hyperbolic distance between any zero and any critical point of any non-trivial solution
of \eqref{eq:de2} is uniformly bounded away from zero.
\end{corollary}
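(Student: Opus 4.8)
The plan is to derive the corollary as the constant-weight specialization of Theorem~\ref{thm:main}. First I would choose the constant function $\psi\equiv c$ for a suitably small constant $c\in(0,1)$; since a constant function is trivially non-increasing, the only thing to check is the smoothness condition~\eqref{eq:smoothness}. Here the argument $(r+\psi(r))/(1+r\psi(r))=(r+c)/(1+rc)$ is just a point of $\D$, and because $\psi$ is constant the quotient $\psi(r)/\psi((r+c)/(1+rc))=c/c=1$ for every $r\in[0,1)$, so $K=1<\infty$. Thus \eqref{eq:smoothness} holds with $K=1$ for any constant weight.

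Next I would verify \eqref{eq:grest}. With $\psi\equiv c$ the growth condition becomes
\begin{equation*}
\sup_{z\in\D}\,\abs{A(z)}\,\big(c\,(1-\abs{z}^2)\big)^2 = c^2\,\nm{A}_{H^\infty_2} = M < \infty,
\end{equation*}
which holds precisely because $A\in H^\infty_2$, and one can even keep $c$ fixed (say $c=1/2$) so that $M=\nm{A}_{H^\infty_2}/4$ depends only on $\nm{A}_{H^\infty_2}$. Then Theorem~\ref{thm:main} applies and yields, for any zero $\zeta\in\D$ and any critical point $a\in\D$ of any non-trivial solution $f$ of \eqref{eq:de2},
\begin{equation*}
\varrho_h(\zeta,a)\geq \frac{1}{2}\log\frac{1+c/\max\{\sqrt{M},1\}}{1-c/\max\{\sqrt{M},1\}}.
\end{equation*}
Since the right-hand side is a strictly positive constant that does not depend on $f$, $\zeta$, $a$, or the particular point of $\D$ — only on $c$ and $M$, hence ultimately only on $\nm{A}_{H^\infty_2}$ — the hyperbolic distance is uniformly bounded away from zero, which is the assertion.

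There is essentially no obstacle here: the content is entirely in Theorem~\ref{thm:main}, and the corollary is a matter of exhibiting an admissible weight $\psi$ and reading off that the resulting lower bound is a uniform positive constant. The only mild point worth stating explicitly is that the constant weight satisfies \eqref{eq:smoothness}, and that the resulting $M$ in \eqref{eq:grest} is finite and controlled by $\nm{A}_{H^\infty_2}$; both are immediate. Alternatively, one may simply cite \cite[Theorem~8.2.2]{H:1997}, as noted in the excerpt, but the derivation above keeps the argument self-contained within the present framework.
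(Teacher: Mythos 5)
Your argument is correct and is exactly what the paper intends: the corollary is stated as the special case of Theorem~\ref{thm:main} obtained by taking a constant weight $\psi\equiv c\in(0,1)$, for which \eqref{eq:smoothness} holds with $K=1$ and \eqref{eq:grest} holds with $M=c^2\nm{A}_{H^\infty_2}$, so the lower bound in \eqref{eq:estimate} becomes a positive constant independent of the zero, the critical point, and the solution. Nothing is missing.
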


The following examples examine the sharpness of \eqref{eq:estimate}.

%%%%%%%%%%%%%%%%%%%%%%%
%%%% ---- EXAMPLE ---- %%%%
%%%%%%%%%%%%%%%%%%%%%%%

\begin{example}
Let $0<\gamma<\infty$. Then, the differential equation \eqref{eq:de2} with 
$$A(z)=(1+4\gamma^2)/(1-z^2)^2, \quad z\in\D,$$
admits the solution
\begin{equation*}
f(z)= \sqrt{1-z^2} \, \sin\left( \gamma \log\frac{1+z}{1-z} \right), \quad z\in\D,
\end{equation*}
whose zeros $\zeta_n = (e^{\pi n/\gamma}-1)/(e^{\pi n/\gamma}+1)$ are real for all $n\in\Z$
\cite[p.~162]{S:1955}. The hyperbolic distance
between two consecutive zeros is precisely $\pi/(2\gamma)$. Since~$f$ is a~real differentiable function 
on the real axis, we conclude that $f$ has a critical point in each open interval $(\zeta_n,\zeta_{n+1})$, $n\in\N$.
This means that there exists a sequence $\{ a_n\}_{n=1}^\infty$ of real critical points of $f$ such that
$\varrho_h(\zeta_n,a_n)$ remains uniformly bounded above  as $n\to\infty$. \hfill $\diamond$
\end{example}

%%%%%%%%%%%%%%%%%%%%%%%
%%%% ---- EXAMPLE ---- %%%%
%%%%%%%%%%%%%%%%%%%%%%%

\begin{example}
Let $1<q<\infty$. Then, the differential equation \eqref{eq:de2} with 
$$A(z)=\big( p'(z) \big)^2 + \frac{1}{2} \, S_p(z), \quad p(z) = \left( \log\frac{e}{1-z} \right)^q, \quad z\in\D,$$
admits the solution
\begin{equation*}
f(z)=\frac{1}{\sqrt{p'(z)}} \, \sin\big( p(z) \big), \quad z\in\D,
\end{equation*}
whose zeros $\zeta_n = 1-\exp( 1- (n\pi)^{1/q} )$, $n\in\Z$, are real
\cite[Example~12]{CGHR:2013}. 
%We deduce 
%\begin{equation*}
%\varrho_h(\zeta_n, \zeta_{n+1}) \simeq \frac{\pi}{2q}\,  \big(n\pi\big)^{1/q-1}, \quad n\to\infty.
%\end{equation*} 
Since~$f$ is a~real differentiable function 
on the real axis, there exists a sequence $\{ a_n\}_{n=1}^\infty$ of real critical points of $f$ such that
$a_n\in (\zeta_n,\zeta_{n+1})$ for $n\in\N$, and
\begin{equation*}
\varrho_h(\zeta_{n+1},a_n)  \leq \varrho_h(\zeta_{n+1}, \zeta_{n}) \sim \frac{\pi}{2q}\,  \big(n\pi\big)^{1/q-1}, \quad n\to\infty,
\end{equation*}
while
\begin{equation*}
\begin{split}
 \frac{1}{2} \log\frac{1+\psi(|a_n|)/\max\{K\sqrt{M},1\}}{1-\psi(|a_n|)/\max\{K\sqrt{M},1\}}
& \sim \frac{\psi(|a_n|)}{\max\{K\sqrt{M},1\}} 
\geq \frac{\psi(z_{n+1})}{\max\{K\sqrt{M},1\}}\\
& \sim \frac{ (n\pi)^{1/q-1}}{\max\{K\sqrt{M},1\}}, \quad n\to\infty.
\end{split}
\end{equation*}
Here $\psi(r) = 2^{-1} \big(\log(e/(1-r))\big)^{1-q}$ satisfies \eqref{eq:smoothness} for $K=\big(\log(2e)\big)^{q-1}$,
and $M=M(q)$ is the constant in \eqref{eq:grest}.
We conclude that, in this case, both sides of \eqref{eq:estimate} are asymptotically of the same 
order of magnitude.
\hfill $\diamond$
\end{example}

%%%%%%%%%%%%%%%%%%%%%%%
%%%% ---- SECTION ---- %%%%
%%%%%%%%%%%%%%%%%%%%%%%

\section{Normality of solutions} \label{sec:normal}

A meromorphic function $f$ in $\D$ is normal (in the sense of Lehto and Virtanen)~if 
\begin{equation*} 
  \sup_{z\in\D} \, (1-|z|^2) \, f^{\#}(z)<\infty,
\end{equation*}
where $f^{\#} = |f'|/(1+|f|^2)$ is the spherical derivative of $f$. 
For more details on normal functions, we refer to \cite{C:1989,LV:1957}.

Assume that $A \in H^\infty_2$.
Let $f_1$ be a~non-trivial solution of \eqref{eq:de2} whose zero-sequence is $\{\zeta_n\}_{n=1}^\infty\subset \D$.
By \cite[Proposition~7]{GNR:preprint}, $f_1$ is normal if and only if
\begin{equation} \label{eq:preprintnormal}
\sup_{n\in\N} \, (1-|\zeta_n|^2) |f_1'(\zeta_n)| < \infty.
\end{equation}
Equivalently,
$f_1$ is normal if and only if
\begin{equation*}
\sup_{n\in\N}\, (1-|\zeta_n|^2) \, \frac{1}{|f_2(\zeta_n)|} <\infty,
\end{equation*}
where $f_2$ is any solution of \eqref{eq:de2} which is linearly independent to $f_1$.

By solving a certain interpolation problem, we conclude our main result.
Recall that the sequence $\{\zeta_n\}_{n=1}^\infty\subset \D$
is called uniformly separated if
\begin{equation*}
  \inf_{k\in\N} \, \prod_{n\in\N\setminus\{k\}} \varrho_p(\zeta_n,\zeta_k)>0,
\end{equation*}
while the Hardy space $H^p$ for $0<p<\infty$ consists of those analytic functions $f$ in~$\D$
for which
\begin{equation*}
\nm{f}_{H^p} = \lim_{r\to 1^-} \left( \frac{1}{2\pi} \int_0^{2\pi} |f(re^{i\theta})|^p \, d\theta \right)^{1/p}< \infty.
\end{equation*}

%%%%%%%%%%%%%%%%%%%%%%%%
%%%% ---- THEOREM ---- %%%%
%%%%%%%%%%%%%%%%%%%%%%%%

\begin{theorem} \label{thm:mmain}
Let $\{\zeta_n\}_{n=1}^\infty\subset\D$ be a uniformly separated sequence having infinitely many points.
Then, there exists $A\in H^\infty_2$ such that
\eqref{eq:de2} admits a solution~$f$ having the following properties:
\begin{enumerate}
\item[\rm (i)]
the zero-sequence of $f$ is $\{\zeta_n\}_{n=1}^\infty$;
\item[\rm (ii)]
$f$ belongs to the Hardy space $H^p$ for any sufficiently small $0<p<\infty$;
\item[\rm (iii)]
$f$ is non-normal.
\end{enumerate}
\end{theorem}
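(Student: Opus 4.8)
The plan is to construct the solution $f$ directly as a suitable analytic function vanishing precisely on $\{\zeta_n\}$, to define $A$ by forcing $f$ to solve \eqref{eq:de2}, and then to arrange that $A\in H^\infty_2$ while sabotaging the normality criterion \eqref{eq:preprintnormal}. Since $\{\zeta_n\}$ is uniformly separated, the Blaschke product $B$ with these zeros satisfies $(1-|z|^2)|B'(z)|\lesssim 1$ and $(1-|\zeta_n|^2)|B'(\zeta_n)|\geq \delta>0$. A naive candidate $f=B$ is a bounded (hence normal) solution of $f''+Af=0$ with $A=-B''/B$, but $B''/B$ is not in $H^\infty_2$ near the zeros, and $B$ is normal, so this fails both (ii) in spirit and (iii). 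The fix is to look for $f$ of the form $f=B\cdot h$, or more flexibly $f = g$ where $g$ is analytic with zero set exactly $\{\zeta_n\}$, simple zeros, and with $g'(\zeta_n)$ prescribed to grow: concretely one wants $(1-|\zeta_n|^2)|g'(\zeta_n)| = (1-|\zeta_n|^2)|B'(\zeta_n)|\cdot|h(\zeta_n)|\to\infty$ along a subsequence, which forces non-normality by \cite[Proposition~7]{GNR:preprint}, while keeping $g$ (and a second independent solution $f_2$) controlled enough that $A=-g''/g$ lies in $H^\infty_2$.

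The key computation is that if $f$ solves \eqref{eq:de2} and $f_2$ is the linearly independent solution with $W(f,f_2)=1$, then $f_2 = f\int dz/f^2$ and $A = -f''/f$. To keep $A\in H^\infty_2$ one needs $(1-|z|^2)^2|f''(z)/f(z)|$ bounded; near a zero $\zeta_n$ this is automatic provided $f$ behaves like a rescaled Blaschke factor there (the apparent singularity cancels because the zero is simple), and away from the zeros one needs $f$ bounded below and $f''$ not too large. The cleanest route, which I expect the author uses, is to \emph{interpolate}: first choose the target values $f_2(\zeta_n)$ — equivalently $1/(f'(\zeta_n)W) $ — to be a sequence tending to $0$ slowly enough that the interpolation problem "find $f_2$ analytic, bounded-type, with $f_2(\zeta_n)=c_n$" is solvable (uniform separation makes $\{\zeta_n\}$ an interpolating sequence for $H^\infty$ and for many weighted spaces, by Carleson's theorem), yet $(1-|\zeta_n|^2)/|f_2(\zeta_n)| = (1-|\zeta_n|^2)|c_n|^{-1}\to\infty$ along a subsequence, which is exactly the negation of the normality criterion in its $f_2$-form. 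Then set $f := $ (the companion solution) $= f_2\int dz/f_2^2$-type object, or rather reconstruct $f$ from $f_2$ via the Wronskian relation; verify (i) that its zeros are $\{\zeta_n\}$ (they are the zeros of $W(f,f_2)/f_2'$-data, i.e. where $\int dz/f_2^2$ has the right residues), (ii) membership in $H^p$ for small $p$ from the Hardy-space bound on the pieces, and (iii) non-normality from the construction of $c_n$.

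Concretely the steps I would carry out are: (1) Let $B$ be the Blaschke product on $\{\zeta_n\}$; record $(1-|z|^2)|B'(z)|\le C$ and $b:=\inf_n (1-|\zeta_n|^2)|B'(\zeta_n)|>0$. (2) Fix a subsequence $\{\zeta_{n_k}\}$ and a sequence of positive numbers $\lambda_k\to\infty$, with $\lambda_k$ increasing slowly (e.g. $\lambda_k = \log k$); by Carleson interpolation choose a bounded analytic $G$ with $G(\zeta_{n_k}) = \lambda_k/\big((1-|\zeta_{n_k}|^2)|B'(\zeta_{n_k})|\big)$ and $G(\zeta_n)=1$ otherwise, or more safely choose $G$ in a space small enough that $f:=BG$ and its derivatives stay controlled — here one may instead prescribe $f_2(\zeta_n)$ directly as in the paragraph above. (3) Define $f$ to be the solution of \eqref{eq:de2} determined by $f$ and the companion $f_2$ via $A=-f''/f$; check $A\in H^\infty_2$ by splitting $\D$ into the pseudohyperbolic discs $\Delta_p(\zeta_n,\sigma)$ (where the simple-zero structure makes $(1-|z|^2)^2 f''/f$ bounded, using the Schwarz–Pick-type estimates available because $f$ is a perturbed Blaschke factor there) and the complement (where $|f|\gtrsim$ something and Cauchy estimates on $f''$ finish it). (4) Deduce (i) from the zero set of $f$, (ii) from $f\in H^p$ for small $p$ — this holds since $B\in H^\infty$ and the correcting factor is at worst a slowly growing outer-type function, so some power is integrable — and (iii) from $(1-|\zeta_{n_k}|^2)|f'(\zeta_{n_k})| \gtrsim \lambda_k\to\infty$ contradicting \eqref{eq:preprintnormal}.

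The main obstacle, I expect, is reconciling (ii) and (iii) with $A\in H^\infty_2$ simultaneously: pushing $(1-|\zeta_{n_k}|^2)|f'(\zeta_{n_k})|$ to infinity means $f$ grows, or $f$ dips close to zero in between in a controlled way, and one must ensure this growth is slow enough that (a) $f$ still lies in $H^p$ for small $p$ (so the growth must be at most like a power of $\log\frac{1}{1-|z|}$, not a genuine Hardy-space-destroying rate), and (b) the ratio $f''/f$ does not blow up faster than $(1-|z|^2)^{-2}$ anywhere — in particular the correcting factor must be chosen so that its logarithmic derivative is $O\big((1-|z|^2)^{-1}\big)$ and its second logarithmic derivative is $O\big((1-|z|^2)^{-2}\big)$. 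The delicate quantitative interpolation (producing a function with prescribed values at an interpolating sequence but with two derivatives under pointwise control) is where the real work lies; everything else is bookkeeping with the Wronskian identity and the normality criterion \eqref{eq:preprintnormal}.
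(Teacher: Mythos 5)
Your overall skeleton matches the paper's: take $f = B\cdot(\text{correcting factor})$ with $B$ the Blaschke product on $\{\zeta_n\}$, define $A=-f''/f$, and defeat normality through the criterion \eqref{eq:preprintnormal}. However, there are two genuine gaps at exactly the points you flag as "where the real work lies," and one of your explicit claims is false.

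First, the analyticity of $A=-f''/f$ at the zeros is \emph{not} automatic from the zeros being simple. At a simple zero $\zeta_n$ of $f$, the quotient $f''/f$ has a simple pole unless $f''(\zeta_n)=0$. Writing $f=Be^g$ one gets
\begin{equation*}
A = -\frac{B''+2B'g'}{B} - (g')^2 - g'',
\end{equation*}
and the singularity at $\zeta_n$ is removable if and only if $g'(\zeta_n) = -\tfrac{1}{2}B''(\zeta_n)/B'(\zeta_n)$. This is a nontrivial interpolation condition on the \emph{derivative} of the correcting factor, and it is precisely what the paper's auxiliary lemma (Lemma~\ref{lemma:interpolation}(i)) is built to deliver; the target values $w_n=-\tfrac12 B''(\zeta_n)/B'(\zeta_n)$ satisfy $\sup_n(1-|\zeta_n|^2)|w_n|<\infty$ because of uniform separation, which is what makes the interpolation feasible. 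Your assertion that "the apparent singularity cancels because the zero is simple" skips this entirely.

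Second, your step (2) is internally inconsistent: you ask Carleson's theorem for a \emph{bounded} analytic $G$ with $G(\zeta_{n_k})=\lambda_k/\bigl((1-|\zeta_{n_k}|^2)|B'(\zeta_{n_k})|\bigr)\to\infty$, which is impossible, and a bounded $G$ would in any case make $f=BG$ bounded and hence normal. You correctly identify this tension as the main obstacle but do not resolve it. The paper's resolution is to work at the level of $g=\log(\text{correction})$: set $g(z)=B(z)h(z)+\log\frac{1}{\xi-z}$ with $h\in H^\infty$ solving the derivative interpolation modulo the logarithmic term and $\xi$ a boundary accumulation point of $\{\zeta_n\}$. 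Then $g\in\BMOA$, so $f=Be^g\in H^p$ for small $p$ by Cima--Schober, the Bloch-type bounds on $g'$ and $g''$ keep $A\in H^\infty_2$, and $\operatorname{Re}g(\zeta_n)=\log\frac{1}{|\xi-\zeta_n|}+O(1)$ is unbounded above, giving $(1-|\zeta_n|^2)|f'(\zeta_n)|\geq\delta e^{\operatorname{Re}g(\zeta_n)}\to\infty$ along a subsequence. Without some device of this kind (an unbounded $\BMOA$ multiplier with prescribed derivative values on the sequence), your construction does not close.
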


Note that normal solutions of \eqref{eq:de2} are known to possess some nice properties. For example, 
all normal solutions of \eqref{eq:de2} belong to the Hardy space $H^p$ for any sufficiently
small $0<p<\infty$ provided that $|A(z)|^2(1-|z|^2)^3 \, dm(z)$ is a Carleson measure; see \cite[Corollary~9]{GNR:preprint}
for the result and discussion on the Carleson measure condition.

The proof of Theorem~\ref{thm:mmain} relies on the following auxiliary result, which concerns interpolation. 
Recall that the space $\BMOA$ contains those functions $g\in H^2$ for which
\begin{equation*}
\sup_{a\in\D} \, \bnm{ g(\varphi_a(z))-g(a)}_{H^2} <\infty.
\end{equation*}

%%%%%%%%%%%%%%%%%%%%%%
%%%% ---- LEMMA ---- %%%%
%%%%%%%%%%%%%%%%%%%%%%

\begin{lemma} \label{lemma:interpolation}
Let $\{\zeta_n\}_{n=1}^\infty\subset\D$ be a uniformly separated sequence having infinitely many points, 
and assume that $\{w_n\}_{n=1}^\infty \subset \C$ satisfies
\begin{equation*}
\sup_{n\in\N} \, (1-|\zeta_n|^2)\, |w_n| \leq S < \infty.
\end{equation*}
Then, there exists $g = g(\{\zeta_n\}_{n=1}^\infty, \{w_n\}_{n=1}^\infty)\in\BMOA$ having the properties:
\begin{enumerate}
\item[\rm (i)]
$g'(\zeta_n) = w_n$ for $n\in\N$;\\[-0.3cm]
\item[\rm (ii)]
$\limsup\limits_{n\to\infty} \, {\rm Re}\, g(\zeta_n) = \infty$.
\end{enumerate}
\end{lemma}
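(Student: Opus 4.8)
The plan is to build $g$ as an explicit sum of two pieces: an interpolation piece $g_1$ that takes care of the derivative data $g_1'(\zeta_n)=w_n$, and a "logarithmic" piece $g_2$ whose real part is unbounded along the sequence but whose contribution to the derivative data at the $\zeta_n$ can be absorbed by a correction. Both pieces must lie in $\BMOA$.

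For the first piece I would invoke the classical interpolation theory for uniformly separated sequences. Since $\{\zeta_n\}$ is uniformly separated, it is in particular an interpolating sequence for $H^\infty$, and the hypothesis $\sup_n (1-|\zeta_n|^2)|w_n|\le S$ is exactly the size condition that makes $\{(1-|\zeta_n|^2)w_n\}\in\ell^\infty$. By a standard result (e.g. interpolation into $\BMOA$, or the fact that one may interpolate prescribed values of $(1-|\zeta_n|^2)g'(\zeta_n)$ in $\ell^\infty$ by a $\BMOA$ function — this is the derivative-interpolation counterpart of the Shapiro–Shields theorem, and uniform separation is the relevant sufficient condition), there is $g_1\in\BMOA$ with $g_1'(\zeta_n)=w_n$ for all $n$. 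The point is that the $\BMOA$ norm of $g_1$ is controlled by $S$ and by the separation constant of $\{\zeta_n\}$, uniformly.

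For the second piece, fix a subsequence $\{\zeta_{n_k}\}$ and consider a function of the form $g_2(z)=\sum_k c_k\,\log\frac{2}{1-\overline{\zeta_{n_k}}z}$ (or a single such term with $\zeta_{n_k}\to\partial\D$ rapidly), where the $c_k>0$ are chosen small but with $\sum_k c_k\log\frac{2}{1-|\zeta_{n_k}|}=\infty$ along the diagonal so that $\Real g_2(\zeta_{n_k})\to\infty$, while a lacunary/sparsity condition on $\{\zeta_{n_k}\}$ keeps $g_2\in\BMOA$ (each summand $\log\frac{2}{1-\overline{a}z}$ is in $\BMOA$ with norm bounded independently of $a$, and a sufficiently thin sum of them stays in $\BMOA$). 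This gives (ii) for $g_2$. The derivative $g_2'(\zeta_n)$ is not zero, so I then set $g=g_1-g_3+g_2$, where $g_3\in\BMOA$ is a second application of the interpolation lemma chosen so that $g_3'(\zeta_n)=g_2'(\zeta_n)$ for all $n$; one must check that $\{(1-|\zeta_n|^2)g_2'(\zeta_n)\}$ is bounded, which follows since $g_2\in\BMOA$ implies $(1-|z|^2)|g_2'(z)|$ is bounded (indeed $\BMOA\subset$ Bloch). Then $g'(\zeta_n)=w_n$, and $\Real g(\zeta_n)=\Real g_1(\zeta_n)-\Real g_3(\zeta_n)+\Real g_2(\zeta_n)$; since $g_1,g_3\in\BMOA$ need not have bounded real part along $\{\zeta_n\}$ in general — here is a subtlety — I would instead arrange that $g_1,g_3$ are chosen with the stronger property that $\Real g_i(\zeta_n)$ stays bounded (this can be ensured by the Earl-type explicit interpolation construction, which gives an interpolant that is a bounded Blaschke-type product times a constant, hence bounded, so its real part along the sequence is bounded), or simply note that it suffices to pass to the subsequence $\{\zeta_{n_k}\}$ where $\Real g_2$ dominates: since $\Real g_2(\zeta_{n_k})\to\infty$ grows while $|g_1(\zeta_{n_k})|,|g_3(\zeta_{n_k})|$ grow only like $O(\log\frac{1}{1-|\zeta_{n_k}|})$, choosing the $c_k$ decaying slowly enough (but still summably thin in the $\BMOA$ sense) makes $g_2$ win. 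Thus $\limsup_n\Real g(\zeta_n)=\infty$.

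The main obstacle is reconciling the two requirements on $g_2$: its real part must blow up along a subsequence, yet $g_2$ (and the correction $g_3$) must remain in $\BMOA$, and the growth of $\Real g_2$ along $\{\zeta_{n_k}\}$ must outpace the worst-case growth $O(\log\frac1{1-|\zeta|})$ of a general $\BMOA$ function. The balancing act — picking the thinning of $\{\zeta_{n_k}\}$ and the weights $c_k$ so that the series defining $g_2$ converges in $\BMOA$ norm while $\sum c_k\log\frac{2}{1-|\zeta_{n_k}|}$ diverges along the diagonal — is the technical heart of the argument; it is exactly the classical phenomenon that $\BMOA$ contains functions with unbounded (even unbounded-along-sequences) real part, and one exploits the logarithmic gap between the Bloch/$\BMOA$ growth rate $\log\frac1{1-|z|}$ and the pointwise values of a thin lacunary logarithmic sum restricted to its own singularities.
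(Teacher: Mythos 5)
Your plan is essentially the paper's argument, just split into more pieces. The paper takes the ``single term'' version of your $g_2$: it fixes a boundary accumulation point $\xi\in\partial\D$ of $\{\zeta_n\}$ and sets $g(z)=B(z)h(z)+\log\frac{1}{\xi-z}$, where $B$ is the Blaschke product with zeros $\{\zeta_n\}$ and $h\in H^\infty$ is a Carleson interpolant with $h(\zeta_n)=\bigl(w_n-\frac{1}{\xi-\zeta_n}\bigr)/B'(\zeta_n)$; these target values are bounded because uniform separation gives $(1-|\zeta_n|^2)|B'(\zeta_n)|>\delta$ and $(1-|\zeta_n|^2)/|\xi-\zeta_n|\le 2$. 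This folds your $g_1$ and $g_3$ into the single \emph{bounded} function $Bh$, so $\Real g(\zeta_n)=\log\frac{1}{|\xi-\zeta_n|}+O(1)$ and (ii) is immediate along the subsequence tending to $\xi$. Your primary resolution of the ``real part of the interpolant'' subtlety --- take the derivative-interpolants in the form $Bh$ with $h\in H^\infty$, hence bounded --- is exactly right and is all that is needed. Two caveats on your alternatives, though. First, a single term $\log\frac{2}{1-\overline{a}z}$ with the pole at an \emph{interior} point $a\in\D$ is a bounded function and contributes nothing to (ii); the singularity must sit on $\partial\D$. Second, the fallback ``$g_2$ wins because $g_1,g_3$ grow only like $O(\log\frac{1}{1-|\zeta|})$'' does not work: with $\sum_k c_k<\infty$ the diagonal term of your lacunary sum is $c_j\log\frac{2}{1-|\zeta_{n_j}|}=o\bigl(\log\frac{1}{1-|\zeta_{n_j}|}\bigr)$, so it cannot outpace a genuine Bloch-rate term; this also means the ``balancing act'' you describe as the technical heart is a red herring --- with bounded interpolants there is nothing to outpace, and the real content is only the elementary norm estimate on $h(\zeta_n)$ coming from uniform separation. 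Since your main route avoids the flawed fallback, the proof goes through.
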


%%%%%%%%%%%%%%%%%%%%%%
%%%% ---- PROOF ---- %%%%
%%%%%%%%%%%%%%%%%%%%%%

\begin{proof}
Let $\xi\in\partial\D$ be an accumulation point of $\{\zeta_n\}_{n=1}^\infty$, and let
$B$ be the Blaschke product in \eqref{eq:Blaschke}. Since $\{\zeta_n\}_{n=1}^\infty$ is
uniformly separated, there exists a constant $0<\delta<1$ such that
\begin{equation} \label{eq:unisepp}
(1-|\zeta_n|^2) |B'(\zeta_n)| > \delta, \quad n\in\N.
\end{equation}
Define the sequence $\{\nu_n\}_{n=1}^\infty$ by
\begin{equation*}
\nu_n = \frac{1}{B'(\zeta_n)} \left( w_n - \frac{1}{\xi-\zeta_n} \right), \quad n\in\N,
\end{equation*}
and note that
\begin{equation*}
|\nu_n| \leq \frac{1-|\zeta_n|^2}{(1-|\zeta_n|^2)|B'(\zeta_n)|} \left( |w_n| + \frac{1}{|\xi-\zeta_n|} \right)
\leq \frac{S+2}{\delta} < \infty, \quad n\in\N.
\end{equation*}
Let $h$ be a bounded analytic function in $\D$ which solves the interpolation problem $h(\zeta_n)=\nu_n$ for $n\in\N$.
Existence of such function $h$ follows from \cite[Theorem~3]{C:1962}.
Finally, define
\begin{equation*}
g(z) = B(z) \, h(z) + \log \frac{1}{\xi-z}, \quad z\in\D.
\end{equation*}
Now $g\in\BMOA$, and $g'(\zeta_n) = B'(\zeta_n) \, \nu_n + 1/(\xi - \zeta_n) = w_n$ for
$n\in\N$. Moreover,
\begin{equation*}
\limsup_{n\to\infty} \, {\rm Re}\, g(\zeta_n) = \limsup_{n\to\infty} \, \log\frac{1}{|\xi-\zeta_n|} = \infty,
\end{equation*}
since $\xi$ is an accumulation point. 
\end{proof}

%%%%%%%%%%%%%%%%%%%%%%
%%%% ---- PROOF ---- %%%%
%%%%%%%%%%%%%%%%%%%%%%

\begin{proof}[Proof of Theorem~\ref{thm:mmain}]
We apply a method which was also used in \cite[pp.~359--360]{GH:2012}.
Let $B$ be the Blaschke product in \eqref{eq:Blaschke}, and define the sequence $\{w_n\}_{n=1}^\infty$ by
\begin{equation*}
w_n = - \frac{1}{2} \frac{B''(\zeta_n)}{B'(\zeta_n)}, \quad n\in\N.
\end{equation*}
Now
\begin{equation*}
\sup_{n\in\N} \, (1-|\zeta_n|^2) |w_n| 
= \sup_{n\in\N} \, \frac{(1-|\zeta_n|^2)^2 |B''(\zeta_n)|}{2(1-|\zeta_n|^2)|B'(\zeta_n)|}
\leq \frac{\nm{B''}_{H^\infty_2}}{2\delta} < \infty,
\end{equation*}
where $\delta$ is the constant in \eqref{eq:unisepp}. 
Let $g$ be the function given by Lemma~\ref{lemma:interpolation}.

Define $f=Be^g$. By construction, $f$ has the prescribed zeros $\{\zeta_n\}_{n=1}^\infty$.
Since $g\in\BMOA$, $f\in H^p$ for any sufficiently small $0<p<\infty$ \cite[Theorem~1]{CS:1976}.
Following the argument in \cite[pp.~359--360]{GH:2012}, we conclude that
\begin{equation*}
A = - \frac{f''}{f} = - \frac{B''+2B'g'}{B} - (g')^2-g''
\end{equation*}
is analytic, since the interpolation property Lemma~\ref{lemma:interpolation}(i) guarantees that
$A$ has a~removable singularity at each point $\zeta_n$ for $n\in\N$. We also have $A\in H^\infty_2$,
since $\{\zeta_n\}_{n=1}^\infty$ is uniformly separated and $g\in\BMOA$;
see \cite{GH:2012} for more details.
Since $f$ is a solution of \eqref{eq:de2}
with $A\in H^\infty_2$, and
\begin{align*}
\limsup_{n\to\infty} \, (1-|\zeta_n|^2) |f'(\zeta_n)| 
         & = \limsup_{n\to\infty} \, (1-|\zeta_n|^2) |B'(\zeta_n)| \, e^{{\rm Re}\, g(\zeta_n)}\\
          & \geq \delta \cdot \limsup_{n\to\infty} \, e^{{\rm Re}\, g(\zeta_n)} = \infty,
\end{align*}
we conclude that $f$ is non-normal \cite[Proposition~7]{GNR:preprint}.
\end{proof}

The following result shows that two non-zero distinct values can be prescribed
for a normal solution of \eqref{eq:de2} under the restriction $A\in H^\infty_2$.
This result should be compared to \cite[Theorem~8]{GH:2012} in which one non-zero value is prescribed.

%%%%%%%%%%%%%%%%%%%%%%%%
%%%% ---- THEOREM ---- %%%%
%%%%%%%%%%%%%%%%%%%%%%%%

\begin{theorem} \label{thm:2pre}
Assume that $a,b\in\C$ are non-zero and distinct.
Let $\{\alpha_n\}_{n=1}^\infty\subset \D$ and $\{\beta_n\}_{n=1}^\infty\subset \D$ be two Blaschke
sequences, and let $B_{\alpha}$ and $B_{\beta}$ be the corresponding Blaschke products. 
If there exists a constant $0<\mu<1$ such that
\begin{equation} \label{eq:carleson}
|B_{\alpha} (z)| + |B_{\beta} (z)| \geq \mu >0, \quad z\in\D,
\end{equation}
then there is $A=A(a, b, \{\alpha_n\}_{n=1}^\infty, \{\beta_n\}_{n=1}^\infty)$
such that
$|A(z)|^2(1-|z|^2)^3 dm(z)$ is a~Carleson measure (thus $A\in H^\infty_2$)
and \eqref{eq:de2} admits a~normal solution $f$ for which
\begin{equation} \label{eq:fpre}
f(\alpha_n)=a, \quad f(\beta_n)=b, \quad n\in\N.
\end{equation}
\end{theorem}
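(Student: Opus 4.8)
The plan is to construct the coefficient $A$ and the solution $f$ by a direct interpolation argument, mirroring the strategy used in the proof of Theorem~\ref{thm:mmain} but adapted to the prescription of two finite non-zero values rather than to producing a non-normal solution. The key point is that condition~\eqref{eq:carleson} is a corona-type hypothesis, which means the Blaschke products $B_\alpha$ and $B_\beta$ have no common zeros and their zero sets are ``uniformly spread apart'', so that one can build an analytic function that behaves like $a$ near the $\alpha_n$ and like $b$ near the $\beta_n$.

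First I would look for $f$ in the form $f = a + B_\alpha \cdot u$ for a suitable analytic $u$ obtained from a second layer of the construction; equivalently, one may write $f - a = B_\alpha g_1$ and $f - b = B_\beta g_2$ and solve for $g_1, g_2$ using the corona theorem applied to \eqref{eq:carleson}. Concretely, by the corona theorem there exist bounded analytic $h_\alpha, h_\beta$ with $B_\alpha h_\alpha + B_\beta h_\beta \equiv 1$ (or $\equiv b - a$ after scaling), and then $f = a B_\beta h_\beta + b B_\alpha h_\alpha$ already satisfies \eqref{eq:fpre}: at $\alpha_n$ one has $B_\alpha(\alpha_n)=0$ and $B_\beta(\alpha_n)h_\beta(\alpha_n)=1$, giving $f(\alpha_n)=a$, and symmetrically $f(\beta_n)=b$. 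Such an $f$ is bounded, hence automatically in $H^\infty$, hence normal, and its zeros are controlled. The delicate part is then to show that the associated $A = -f''/f$ is analytic and that $|A(z)|^2(1-|z|^2)^3\,dm(z)$ is a Carleson measure: analyticity requires that $f$ not vanish where the construction would otherwise force a singularity, and more importantly one needs enough freedom in the interpolating data to guarantee the Carleson-measure growth of $f''/f$. To gain that freedom I would not use the bare corona solution but rather solve an interpolation problem \emph{with derivative constraints} at the points $\alpha_n$ and $\beta_n$, choosing the values of $f'$ there (which, by the normality criterion \eqref{eq:preprintnormal} and \cite[Corollary~9]{GNR:preprint}, must satisfy $\sup_n (1-|\alpha_n|^2)|f'(\alpha_n)| < \infty$ and likewise for $\beta_n$) so that $A$ has removable singularities and the right quadratic growth; this is exactly the role played by the choice of $\{w_n\}$ and Lemma~\ref{lemma:interpolation} in the proof of Theorem~\ref{thm:mmain}, and here one would invoke an interpolation result for $\BMOA$ (or for the relevant Carleson-measure-inducing class) compatible with the union $\{\alpha_n\}\cup\{\beta_n\}$, whose separation is supplied by \eqref{eq:carleson}.

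The key steps, in order, are: (1) use \eqref{eq:carleson} and the corona theorem to produce a bounded analytic $f_0$ with $f_0(\alpha_n)=a$, $f_0(\beta_n)=b$; (2) modify $f_0$ to $f = f_0 e^{B_\alpha B_\beta G}$ or $f = f_0 + B_\alpha B_\beta H$ for an analytic function $G$ (resp.\ $H$) chosen via an interpolation lemma so as to prescribe the values of $f'$ at the $\alpha_n$ and $\beta_n$ appropriately — note $B_\alpha B_\beta$ and its derivative vanish at every $\alpha_n$ and $\beta_n$, so this modification preserves \eqref{eq:fpre} while adjusting derivatives; (3) verify that $A = -f''/f$ is analytic, the singularities at $\alpha_n$ (where $f=a\neq 0$) and $\beta_n$ (where $f=b\neq 0$) being removable by the derivative-interpolation conditions, exactly as in the proof of Theorem~\ref{thm:mmain}; (4) estimate $|A(z)|(1-|z|^2)$ and the Carleson-measure norm of $|A(z)|^2(1-|z|^2)^3\,dm(z)$ using the $\BMOA$/Carleson membership of the interpolating data together with the uniform separation coming from \eqref{eq:carleson}; (5) conclude that $f$ is normal, either directly because $f$ is bounded, or via \eqref{eq:preprintnormal} together with the derivative bounds built into step~(2), and invoke \cite[Corollary~9]{GNR:preprint}.

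The main obstacle I anticipate is step~(4): securing the Carleson-measure bound on $|A|^2(1-|z|^2)^3\,dm$ rather than merely $A\in H^\infty_2$. The corona solutions $h_\alpha, h_\beta$ are only bounded, and their derivatives need not obey any nice growth restriction, so $f''/f$ built from them could fail the Carleson condition; the remedy is to absorb all the ``bad'' behavior into the controlled interpolation factor of step~(2), which forces one to be careful that the interpolation problem for the derivative values is solvable within a class whose members $g$ have $|g'(z)|^2(1-|z|^2)^3\,dm(z)$ Carleson — this is precisely the kind of statement the paper has been setting up with its $\BMOA$ interpolation lemma, so the heart of the argument is checking that the union sequence $\{\alpha_n\}\cup\{\beta_n\}$ inherits, from \eqref{eq:carleson}, the separation/interpolation properties needed to apply it, and that the prescribed derivative values $w_n = -\tfrac12 (B_\alpha B_\beta)''(\zeta_n)/(B_\alpha B_\beta)'(\zeta_n)$-type quantities satisfy the requisite $\sup_n (1-|\zeta_n|^2)|w_n| < \infty$ bound.
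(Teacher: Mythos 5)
There is a genuine gap, and it sits exactly where you flagged ``the delicate part'': the analyticity of $A=-f''/f$. Your candidate $f_0=aB_\beta h_\beta+bB_\alpha h_\alpha=a+(b-a)B_\alpha h_\alpha$ is bounded and interpolates correctly, but nothing prevents it from vanishing at points other than the $\alpha_n$ and $\beta_n$; at a simple, unprescribed zero $z_0$ of $f$ the function $-f''/f$ has a genuine pole unless $f''(z_0)=0$, and no amount of derivative interpolation at the points $\alpha_n,\beta_n$ (where $f$ equals $a\neq0$ or $b\neq0$, so $-f''/f$ is already analytic and there is nothing to remove) can repair this. The removable-singularity mechanism of Theorem~\ref{thm:mmain} works because there the troublesome points are the \emph{prescribed zeros} $\zeta_n$, at which one is free to choose $f'(\zeta_n)$; here the troublesome points are the \emph{unprescribed} zeros of $f$, over which your construction gives no control. (A side issue: your parenthetical claim that $(B_\alpha B_\beta)'$ vanishes at every $\alpha_n$ is false for simple zeros, since $(B_\alpha B_\beta)'(\alpha_n)=B_\alpha'(\alpha_n)B_\beta(\alpha_n)\neq0$, and it contradicts your own statement that the modification adjusts derivatives.)

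The fix, which is essentially the whole content of the paper's proof, is to make $f$ zero-free by exponentiating: use \eqref{eq:carleson} and Carleson's theorem to get a bounded analytic $h$ with $h(\alpha_n)=0$ and $h(\beta_n)=1$ (your step (1) in additive form), and then set $f=\exp\bigl(\log a+h\log(b/a)\bigr)$ rather than $f=a+(b-a)h$. This $f$ still satisfies \eqref{eq:fpre}, is bounded above and below away from zero, so $A=-f''/f$ is automatically analytic with no second interpolation layer, and $f$ is normal (it is a bounded non-vanishing analytic function; alternatively \eqref{eq:preprintnormal} holds vacuously). Moreover the Carleson-measure condition --- the step you correctly identified as the main obstacle --- comes for free: $\log f=\log a+h\log(b/a)\in H^\infty\subset\BMOA$, and \cite[Theorem~4(i)]{GNR:preprint} converts this directly into the statement that $|A(z)|^2(1-|z|^2)^3\,dm(z)$ is a Carleson measure. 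Your additive construction blocks this last step as well, since $\log f$ is not globally defined when $f$ may vanish.
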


Note that \eqref{eq:carleson} is satisfied, for example, if 
$\{\alpha_n\}_{n=1}^\infty \cup \{\beta_n\}_{n=1}^\infty$ is uniformly separated.
Of course, this is not necessary for \eqref{eq:carleson} to hold.

%%%%%%%%%%%%%%%%%%%%%%
%%%% ---- PROOF ---- %%%%
%%%%%%%%%%%%%%%%%%%%%%

\begin{proof}[Proof of Theorem~\ref{thm:2pre}]
By \eqref{eq:carleson} and \cite[Theorem~2]{C:1962}, there exists a bounded analytic function
$h$ such that
\begin{equation} \label{eq:hpre}
h(\alpha_n) = 0, \quad h(\beta_n)=1, \quad n\in\N.
\end{equation}
Now, 
\begin{equation*}
f(z) = \exp\bigg( \log a + h(z) \, \log\frac{b}{a} \, \bigg), \quad z\in\D,
\end{equation*}
is a bounded and non-vanishing solution of \eqref{eq:de2}, where
\begin{equation*}
A = -\frac{f''}{f} = - \bigg( h' \cdot \log\frac{b}{a} \bigg)^2 - h'' \cdot  \log\frac{b}{a}.
\end{equation*}
Solution $f$ is normal by \eqref{eq:preprintnormal}, and \eqref{eq:fpre} follows from \eqref{eq:hpre}.
Since $h$ is bounded, we conclude that $\log f \in \BMOA$ and hence $|A(z)|^2(1-|z|^2)^3\, dm(z)$ is
a Carleson measure by \cite[Theorem~4(i)]{GNR:preprint}. In particular, $A\in H^\infty_2$.
\end{proof}

If $f$ is a normal meromorphic function, then by a result \cite{Y:1975} due to Yamashita
there exists a constant $C=C(f)$ such that
\begin{equation} \label{eq:balance}
\sup_{z\in\D}\, \left( (1-|z|^2) \, (f')^{\#}(z) \right) \left( (1-|z|^2) \, f^{\#}(z) \right) \leq C <\infty,
\end{equation}
while the converse statement is known to be false; see also \cite{L:1977}. 
By combining Theorem~\ref{thm:mmain}
and the following result, we conclude that a differential equation
\eqref{eq:de2} with $A\in H^\infty_2$ may admit a non-normal solution
satisfying \eqref{eq:balance}.

%%%%%%%%%%%%%%%%%%%%%%%
%%%% ---- THEOREM ---- %%%%
%%%%%%%%%%%%%%%%%%%%%%%

\begin{theorem} \label{thm:balance}
Suppose that $f$ is a solution of \eqref{eq:de2}, where $A$ is analytic in $\D$. Then
$(f')^\#(z) \, f^\#(z) \leq 4^{-1} \, |A(z)|$ for all $z\in\D$.
\end{theorem}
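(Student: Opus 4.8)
The plan is to estimate the two spherical derivatives $(f')^\#$ and $f^\#$ directly at a fixed point $z\in\D$ and multiply the resulting bounds. Recall that $f^\#(z) = |f'(z)|/(1+|f(z)|^2)$ and $(f')^\#(z) = |f''(z)|/(1+|f'(z)|^2)$. Using the differential equation \eqref{eq:de2}, we may replace $f''(z)$ by $-A(z)f(z)$, so that
\begin{equation*}
(f')^\#(z)\, f^\#(z) = \frac{|A(z)|\,|f(z)|}{1+|f'(z)|^2}\cdot\frac{|f'(z)|}{1+|f(z)|^2}
= |A(z)|\,\frac{|f(z)|\,|f'(z)|}{\bigl(1+|f(z)|^2\bigr)\bigl(1+|f'(z)|^2\bigr)}.
\end{equation*}

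The key elementary fact is that $t/(1+t^2)\leq 1/2$ for every real $t\geq 0$, with equality at $t=1$. Applying this with $t=|f(z)|$ and with $t=|f'(z)|$ separately, the product of the two fractions on the right is at most $\tfrac12\cdot\tfrac12 = \tfrac14$. This immediately yields $(f')^\#(z)\, f^\#(z)\leq 4^{-1}|A(z)|$, as claimed. Since $z\in\D$ was arbitrary, we are done.

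There is essentially no obstacle here: the whole argument is the substitution $f''=-Af$ together with the scalar inequality $x/(1+x^2)\le 1/2$ applied twice. The only point worth a moment's care is that the factorization of the product into a function of $|f|$ times a function of $|f'|$ is exact — no cross terms appear — which is precisely why the two one-variable bounds can be combined multiplicatively without loss beyond the constant $4$. One might remark afterwards that the constant $4^{-1}$ is sharp, attained pointwise whenever $|f(z)| = |f'(z)| = 1$, though this is not needed for the statement.
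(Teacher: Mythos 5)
Your proof is correct and is essentially the same as the paper's: both substitute $f''=-Af$ and reduce to the elementary bound $t/(1+t^2)\le \tfrac12$ (the paper phrases it as $x+x^{-1}\ge 2$). Your version is marginally cleaner in that the direct substitution makes the paper's separate treatment of zeros and critical points unnecessary.
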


%%%%%%%%%%%%%%%%%%%%%
%%%% ---- PROOF ---- %%%%
%%%%%%%%%%%%%%%%%%%%%

\begin{proof}
The claim is trivially true for the critical points of $f$. 
Similarly, at the zeros of~$f$ the claim follows from \eqref{eq:de2}.
For any $z\in\D$, for which $f'(z)\neq 0$ and $f(z)\neq 0$, %we obtain
\begin{align*}
  \abs{A(z)} & = \left| \frac{f''(z)}{f(z)} \right| 
  = \left| \frac{f''(z)}{f'(z)} \right|  \left| \frac{f'(z)}{f(z)} \right| 
  \geq 4 \, \frac{\left| \frac{f''(z)}{f'(z)} \right|}{\abs{f'(z)}^{-1} + \abs{f'(z)}}  
  \, \frac{\left| \frac{f'(z)}{f(z)} \right|}{\abs{f(z)}^{-1} + \abs{f(z)}}\\
  & = 4 \, (f')^\#(z) \, f^\#(z).
\end{align*}
Here we applied the inequality $x^{-1}+x\geq 2$ for $0<x<\infty$.
\end{proof}

%%%%%%%%%%%%%%%%%%%%%%%
%%%% ---- SECTION ---- %%%%
%%%%%%%%%%%%%%%%%%%%%%%

\section{Normality of the quotient} \label{sec:quo}

Let $f_1$ and $f_2$ be linearly independent solutions of \eqref{eq:de2} with $A\in H^\infty_2$.
We may apply \cite[Theorem~3]{S:1955} to the zero-sequences of $f_1$ and $f_2$, 
but it is unclear whether these zero-sequences are separated from each other.
Concerning the case of the complex plane, see \cite[Theorem~2.6]{E:1999}.

The hyperbolic distance between any two distinct zeros of
$f_1f_2$ is known to be uniformly bounded away from zero, for example, 
if there exists a~constant $0<C<\infty$ such that
\begin{equation*}
(1-|z|^2)^2 |A(z)| \leq 1 + C(1-|z|), \quad z\in\D.
\end{equation*}
See \cite[Corollary~4]{GR:preprint}, which is essentially a restatement of \cite[Corollary, p.~328]{S:1983}.

If $f_1$ and $f_2$ are linearly independent solutions of \eqref{eq:de2}, then $w=f_1/f_2$ is
a~locally univalent meromorphic function in $\D$ such that the Schwarzian derivative $S_w$
satisfies $S_w=2A$; see \cite[Theorem~6.1]{L:1993}.
Hence, to prove that the hyperbolic distance between any two distinct zeros of $f_1f_2$ is uniformly
bounded away from zero, it is sufficient to show that the
hyperbolic distance between any
zero and any pole of $w$ is uniformly bounded away from zero. Recall that all normal functions satisfy this property
by the Lipschitz-continuity (as mappings from $\D$ equipped with the hyperbolic metric
to the Riemann sphere with the chordal metric).

Suppose that $w$ is a meromorphic function
satisfying $S_w \in H^\infty_2$. Does it follow that $w$ is normal?  In favor of the affirmative answer,
recall that $\nm{S_w}_{H^\infty_2} \leq 2$ implies that $w$ is univalent \cite[Theorem I]{N:1949}, and hence normal~\cite[p.~53]{LV:1957}.
As surprising as it is, the answer to this question is negative.
By a result \cite{L:1973} due to P.~Lappan, 
there exists a~uniformly locally univalent analytic function in $\D$, which is not normal.
In a subsequent paper \cite[Theorem~5]{L:1978}, Lappan presents a concrete function
having these properties. This function 
\begin{equation} \label{eq:intuni}
\left( 1-z \right)^{-\frac{1+10i}{100}} - \left( 1-z \right)^{-\frac{i}{100}}
\end{equation}
emerges as a primitive of a univalent function in $\D$.

If $w$ is meromorphic in $\D$, and there exists $0<\delta\leq 1$ such that $w$ is univalent
in each pseudo-hyperbolic disc $\Delta_p(a,\delta)=\set{z\in\D : \varrho_p(z,a)<\delta}$ for $a\in\D$,
then $w$ is called uniformly locally univalent. We give a short proof for the
following well-known lemma for the convenience of the reader.

%%%%%%%%%%%%%%%%%%%%%%
%%%% ---- LEMMA ---- %%%%
%%%%%%%%%%%%%%%%%%%%%%

\begin{oldlemma} \label{oldlemma:myl}
A meromorphic function $w$ in $\D$ satisfies $S_w\in H^\infty_2$
if and only if  $w$  is uniformly locally univalent.
\end{oldlemma}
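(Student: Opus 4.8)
The plan is to prove both implications via the classical correspondence between Schwarzian derivatives and pairs of linearly independent solutions of a linear differential equation, localized to pseudo-hyperbolic discs by conformal self-maps of $\D$. Throughout, the key normalization is that for $a\in\D$ and a suitable radius $\rho=\rho(a)$ comparable to $1$, the composition $w\circ\varphi_a$ (or more precisely $w\circ\varphi_a(\rho\, \cdot\,)$) has Schwarzian $S_{w\circ\varphi_a(\rho\,\cdot\,)}(z) = S_w(\varphi_a(\rho z))\,\varphi_a'(\rho z)^2\,\rho^2$, and the factor $(1-|z|^2)^2$ transforms so that $\|S_{w\circ\varphi_a(\rho\,\cdot\,)}\|_{H^\infty_2}$ is controlled by $\rho^2\|S_w\|_{H^\infty_2}$ times a bounded distortion factor; see the analogous computations in the proof of Theorem~\ref{thm:main}.

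First I would prove the forward direction: assume $S_w\in H^\infty_2$, say $\|S_w\|_{H^\infty_2}=M$. Fix $a\in\D$ and set $g_a(z)=w(\varphi_a(\rho z))$ with $\rho=\rho(M)\in(0,1)$ chosen (exactly as in the Case $a\neq 0$ portion of the proof of Theorem~\ref{thm:main}, using the smoothness bookkeeping with a constant function $\psi$) so small that $(1-|z|^2)^2|S_{g_a}(z)|\leq 1$ on $\D$. By Nehari's theorem \cite[Theorem~I]{N:1949}, the bound $\|S_{g_a}\|_{H^\infty_2}\leq 1 < 2$ forces $g_a$ to be univalent on $\D$ (replacing $S$ by $2S$ if one insists on the sharp constant; either way a fixed shrinkage of $\rho$ suffices). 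Hence $w$ is univalent on $\varphi_a(\rho\D)=\Delta_p(a,\rho)$. Since $\rho$ is independent of $a$, $w$ is uniformly locally univalent with $\delta=\rho$.

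For the converse, suppose $w$ is univalent on every pseudo-hyperbolic disc $\Delta_p(a,\delta)$. Fix $a\in\D$ and consider $h=w\circ\varphi_a$ restricted to the Euclidean disc $|z|<\delta$; this $h$ is univalent there, so its Schwarzian $S_h$ is a holomorphic function on $|z|<\delta$, and by Kraus/Nehari-type necessary conditions for univalence — concretely, for a univalent meromorphic function on a disc one has a uniform bound $|S_h(z)|\leq 6/(\,\text{dist}(z,\partial)\,)^2$ in the Euclidean sense, or one simply applies the known inequality for functions univalent on $|z|<\delta$ — we get $|S_h(0)|\leq 6/\delta^2$ (evaluating at the center, where the distortion is easiest). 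Unwinding $S_h(0)=S_w(a)\,\varphi_a'(0)^2 = S_w(a)(1-|a|^2)^2$ gives $(1-|a|^2)^2|S_w(a)|\leq 6/\delta^2$, and since $a$ was arbitrary, $S_w\in H^\infty_2$. The main obstacle is packaging the "necessary condition for univalence on a disc'' cleanly: one wants a clean citable statement (the Kraus inequality $\|S_h\|_{H^\infty_2}\leq 6$ for $h$ univalent on $\D$, applied after rescaling $|z|<\delta$ to $\D$), rather than reproving a distortion estimate; modulo that citation, both directions are short and the constants are irrelevant to the statement.
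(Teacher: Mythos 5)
Your argument is correct, and the forward direction coincides with the paper's: rescale $w\circ\varphi_a$ by a fixed factor $\rho$ depending only on $\nm{S_w}_{H^\infty_2}$ so that the Schwarzian of the composition obeys Nehari's bound, then conclude univalence on $\Delta_p(a,\rho)$. The converse is where you genuinely diverge. The paper passes through the differential equation: with $A=2^{-1}S_w$, uniform local univalence of $w$ forces the zeros of every non-trivial solution of $f''+Af=0$ to be uniformly separated (two zeros of a solution in a small pseudo-hyperbolic disc would give two poles of a quotient of solutions there), and then Schwarz's \cite[Theorem~4]{S:1955} converts zero separation into $A\in H^\infty_2$. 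You instead apply the Kraus--Nehari \emph{necessary} condition $(1-|z|^2)^2|S_h(z)|\le 6$ for functions univalent in a disc, after rescaling $\Delta_p(a,\delta)$ to $\D$, and read off $(1-|a|^2)^2|S_w(a)|\le 6/\delta^2$ at the center. Your route is shorter, self-contained, and yields an explicit constant; the paper's route reuses machinery already central to the article. Two small points you should make explicit: both Nehari's sufficiency theorem and the Kraus inequality are needed in their meromorphic versions (the paper cites \cite[Corollary~6.4]{P:1975} for the former; for the latter one notes that a univalent meromorphic function on a disc omits some finite value, so post-composing with a M\"obius map reduces to the analytic case without changing the Schwarzian), and the citation you flag as the ``main obstacle'' is indeed available, e.g.\ in \cite{P:1975}, so there is no real gap.
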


%%%%%%%%%%%%%%%%%%%%%
%%%% ---- PROOF ---- %%%%
%%%%%%%%%%%%%%%%%%%%%

\begin{proof}
Suppose that $w$ is meromorphic and $S_w\in H^\infty_2$.
If $\nm{S_w}_{H^\infty_2} \leq 2$, then the assertion follows from \cite[Theorem~I]{N:1949};
for the meromorphic case, see \cite[Corollary~6.4]{P:1975}.
If $\nm{S_w}_{H^\infty_2} > 2$, then define
$g_a(z)=w(\varphi_a(\delta z))$ for $a\in\D$ and $\delta= (2/\nm{S_w}_{H^\infty_2})^{1/2}$.
Now
\begin{equation*}
|S_{g_a}(z)| (1-|z|^2)^2 = \big| S_w\big( \varphi_a(\delta z) \big) \big| \big| \varphi_a'(\delta z) \big|^2 \delta^2 (1-|z|^2)^2
\leq \nm{S_w}_{H^\infty_2} \, \delta^2 = 2, \quad z\in\D,
\end{equation*}
and hence Nehari's theorem 
implies that $w$ is univalent in $\Delta_p(a,\delta)$ for any $a\in\D$.

Conversely, suppose that $w$ is meromorphic and uniformly locally univalent. Then, $A=2^{-1} S_w $ is analytic in $\D$, and
the hyperbolic distance between any two distinct 
zeros of any non-trivial solution of \eqref{eq:de2} is uniformly bounded away from zero. Now
\cite[Theorem~4]{S:1955} implies $S_w\in H^\infty_2$.
\end{proof} 

Since $S_w\in H^\infty_2$ does not imply that $w$ is normal, it is natural to ask
whether we can estimate the growth of the spherical derivative of $w$.
For example, if $w$ is the function in \eqref{eq:intuni}, then
\begin{equation*}
  \sup_{z\in\D} \, (1-\abs{z}^2)^2 \, w^\#(z) \leq \sup_{z\in\D} \, (1-\abs{z}^2)^2 \, \abs{w'(z)} < \infty
\end{equation*}
by the distortion theorem of analytic univalent functions \cite[p.~21]{P:1975}.
It turns out that the uniform local 
univalence does restrict the growth of the spherical derivative.
Due to an application of Cauchy-Schwarz inequality, we have a reason to believe that 
the estimate in Theorem~\ref{thm:spg} is not sharp.

%%%%%%%%%%%%%%%%%%%%%%%%
%%%% ---- THEOREM ---- %%%%
%%%%%%%%%%%%%%%%%%%%%%%%

\begin{theorem} \label{thm:spg}
Let $w$ be a meromorphic function in $\D$ such that $S_w\in H^\infty_2$. Then
\begin{equation} \label{eq:anormal}
\sup_{z\in\D} \, (1-\abs{z}^2)^\alpha \, w^\#(z) < \infty
\end{equation}
for all $\alpha$ for which $\left( 1+\nm{S_w}_{H^\infty_2}/2 \right)^{1/2} + 1 < \alpha < \infty$.
\end{theorem}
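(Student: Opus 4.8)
The plan is to turn the statement into a growth estimate for solutions of a linear differential equation, and then into an energy inequality along hyperbolic geodesics.

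First I would set $A=\tfrac12 S_w$, so that $A$ is analytic in $\D$ with $M:=\nm{A}_{H^\infty_2}=\tfrac12\nm{S_w}_{H^\infty_2}$, and $w$ is locally univalent in $\D$ by Lemma~\ref{oldlemma:myl}. As in the proof of Theorem~\ref{thm:main} I would write $w=v_1/v_2$, where $v_1,v_2$ are linearly independent solutions of $v''+Av=0$ with $W(v_1,v_2)=v_1v_2'-v_1'v_2\equiv 1$; these $v_j$ are analytic in all of $\D$, irrespective of the poles of $w$. Then $w'=-v_2^{-2}$, which gives the identity
\[
w^\#(z)=\frac{1}{\abs{v_1(z)}^2+\abs{v_2(z)}^2},\qquad z\in\D,
\]
valid also at the poles of $w$ (where $v_2$ vanishes but $v_1$ does not). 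Applying the Cauchy--Schwarz inequality to $1=\abs{v_1(z)v_2'(z)-v_1'(z)v_2(z)}$ gives $\abs{v_1(z)}^2+\abs{v_2(z)}^2\ge(\abs{v_1'(z)}^2+\abs{v_2'(z)}^2)^{-1}$, hence
\[
w^\#(z)\le \abs{v_1'(z)}^2+\abs{v_2'(z)}^2,\qquad z\in\D .
\]
This use of Cauchy--Schwarz is, I expect, precisely what makes the final estimate non-sharp, as remarked before the theorem.

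It remains to bound $\abs{v_j'}$. Fix $z\in\D$; by the rotation invariance of $\nm{\cdot}_{H^\infty_2}$ I may assume $z=r\in(0,1)$. I would parametrise the radius $[0,r]$ by hyperbolic arclength, $\gamma(s)=\tanh s$ for $s\in[0,L]$ with $L=\varrho_h(0,r)=\tfrac12\log\tfrac{1+r}{1-r}$, and put $g_j=v_j\circ\gamma$. Since $\gamma'(s)=1-\gamma(s)^2$ and $\gamma''(s)=-2\gamma(s)\gamma'(s)$, the functions $g_j$ satisfy
\[
g_j''(s)+2\tanh(s)\,g_j'(s)+q(s)\,g_j(s)=0,\qquad q(s)=A\bigl(\gamma(s)\bigr)\bigl(1-\gamma(s)^2\bigr)^2 ,
\]
and here the hypothesis $A\in H^\infty_2$ is decisive: $\abs{q(s)}=\abs{A(\gamma(s))}\bigl(1-\abs{\gamma(s)}^2\bigr)^2\le M$ for every $s$, while the coefficient of $g_j'$ has modulus at most $2$ --- the transformed equation has \emph{bounded} coefficients. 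A standard weighted-energy Gronwall argument for $E_j(s)=\abs{g_j'(s)}^2+M\abs{g_j(s)}^2$, in which the damping term $-4\tanh(s)\abs{g_j'(s)}^2$ is estimated wastefully by $4\abs{g_j'(s)}^2$, then yields $E_j(s)\le E_j(0)\exp(2\lambda s)$ for all $s\ge0$, with $\lambda=1+\sqrt{1+M}$ and $E_j(0)=\abs{v_j'(0)}^2+M\abs{v_j(0)}^2<\infty$ depending only on $w$.

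Combining the two parts, and using $e^{2L}=\tfrac{1+r}{1-r}$,
\[
w^\#(r)\le\abs{v_1'(r)}^2+\abs{v_2'(r)}^2=\abs{g_1'(L)}^2+\abs{g_2'(L)}^2\le\bigl(E_1(0)+E_2(0)\bigr)\left(\frac{1+r}{1-r}\right)^{1+\sqrt{1+M}},
\]
so $(1-r^2)^\alpha w^\#(r)$ stays bounded on $[0,1)$ as soon as $\alpha>1+\sqrt{1+M}=1+(1+\nm{S_w}_{H^\infty_2}/2)^{1/2}$; rotating back, the same bound holds at every point of $\D$, which is \eqref{eq:anormal}. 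The crux of the argument is the middle step --- that solutions of $v''+Av=0$ grow at most polynomially in $1/(1-\abs z)$, which rests entirely on the cancellation whereby the $H^\infty_2$-bound on $A$ turns into a uniform bound on the coefficients of the geodesic equation --- together with extracting the precise exponent $\lambda$ from the Gronwall inequality; keeping the favourable sign of the damping term would in fact give a smaller $\lambda$.
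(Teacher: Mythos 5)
Your skeleton is close to the paper's: the Steinmetz identity $w^\#=\abs{W(v_1,v_2)}/(\abs{v_1}^2+\abs{v_2}^2)$ followed by Cauchy--Schwarz to get $w^\#\le\abs{v_1'}^2+\abs{v_2'}^2$ is exactly the paper's argument. The difference is in how the derivatives of solutions are bounded: the paper cites a growth estimate for $\abs{f}$ from an external theorem and then applies the Cauchy integral formula, whereas you reprove the needed bound by an energy/Gronwall argument along radii. That is a legitimate and more self-contained route in principle, but your execution contains a genuine error that, as written, fails to give the stated range of $\alpha$.

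The error is in the identification $\abs{v_1'(r)}^2+\abs{v_2'(r)}^2=\abs{g_1'(L)}^2+\abs{g_2'(L)}^2$. Since $g_j(s)=v_j(\gamma(s))$ with $\gamma(s)=\tanh s$, the chain rule gives $g_j'(L)=v_j'(r)\,\gamma'(L)=v_j'(r)(1-r^2)$, so $\abs{v_j'(r)}^2=\abs{g_j'(L)}^2(1-r^2)^{-2}$ and your equality drops a factor $(1-r^2)^{-2}$. Correcting it, your wasteful Gronwall constant $\lambda=1+\sqrt{1+M}$ yields only $w^\#(r)=O\big((1-r^2)^{-\lambda-2}\big)$, i.e. the range $\alpha>3+\sqrt{1+M}$, strictly weaker than the theorem; the two defects happen to cancel numerically in your final display, which is why the exponent looks right. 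The repair is precisely the improvement you wave at in your last sentence: keep the damping term. Writing $x=\abs{g_j'}$, $y=\sqrt{M}\abs{g_j}$ and using $-4\tanh(s)x^2\le-4x^2+8e^{-2s}x^2$ (the correction being integrable in $s$), the requirement $-4x^2+4\sqrt{M}\,xy\le2\lambda(x^2+y^2)$ amounts to positive semi-definiteness of $(2\lambda+4)x^2-4\sqrt{M}\,xy+2\lambda y^2$, which holds for $\lambda\ge\sqrt{1+M}-1$. Then $\abs{v_j'(r)}^2\lesssim(1-r^2)^{-(\lambda+2)}$ with $\lambda+2=\sqrt{1+M}+1=(1+\nm{S_w}_{H^\infty_2}/2)^{1/2}+1$, which is exactly the theorem's exponent. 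So the argument is salvageable, but only with the sharper Gronwall step; as written the proof does not establish the claim.
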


%%%%%%%%%%%%%%%%%%%%%
%%%% ---- PROOF ---- %%%%
%%%%%%%%%%%%%%%%%%%%%

\begin{proof}
By assumption, the differential equation \eqref{eq:de2} with $A=2^{-1} S_w$ admits two linearly
independent solutions $f_1$ and $f_2$ such that $w=f_1/f_2$. By \cite[Theorem~2]{GR:preprint}, all
solutions $f$ of \eqref{eq:de2} satisfy
\begin{equation*} 
\sup_{z\in\D} \, (1-\abs{z}^2)^\alpha \, \abs{f(z)} < \infty, \quad \frac{\left( 1+\nm{S_w}_{H^\infty_2} /2\right)^{1/2} - 1}{2} < \alpha < \infty.
\end{equation*}
By the Cauchy integral formula,
\begin{equation} \label{eq:Cint}
\sup_{z\in\D} \, (1-\abs{z}^2)^\alpha \, \abs{f'(z)}^2 < \infty, \quad \left( 1+\nm{S_w}_{H^\infty_2} /2\right)^{1/2} + 1 < \alpha < \infty.
\end{equation}

As in \cite{S:2012}, we write
\begin{equation*}
w^\# = \frac{|w'|}{1+|w|^2} = \frac{1}{\frac{1}{|w'|}+\frac{|w^2|}{|w'|}} = \frac{|W(f_1,f_2)|}{|f_1|^2+|f_2|^2}.
\end{equation*}
By means of the Cauchy-Schwarz inequality, we deduce
\begin{equation*}
\begin{split}
|W(f_1,f_2)|^2 & = \big| f_1(z) f_2'(z) - f_1'(z) f_2(z) \big|^2
\leq \big( |f_1(z)| |f_2'(z)| + |f_1'(z)| |f_2(z)| \big)^2\\
& \leq \left( |f_1(z)|^2 + |f_2(z)|^2 \right) \left( |f_1'(z)|^2 + |f_2'(z)|^2 \right), \quad z\in\D.
\end{split}
\end{equation*}
In conclusion, $w^\# \leq \abs{W(f_1,f_2)}^{-1} \left( \abs{f_1'}^2 + \abs{f_2'}^2 \right)$. Now, the
assertion follows from~\eqref{eq:Cint}, when applied to $f_1$ and $f_2$.
\end{proof}

We deduce information related to the problem which is mentioned in the beginning of Section~\ref{sec:quo}. In particular,
if $f_1$ and $f_2$ are linearly independent solutions of \eqref{eq:de2} with $A\in H^\infty_2$,
then $w=f_1/f_2$ satisfies \eqref{eq:anormal} for some sufficiently large $\alpha=\alpha(\nm{A}_{H^\infty_2})$
with $1<\alpha<\infty$, by Theorem~\ref{thm:spg}. Now \cite[Theorem~4]{AR:2011} implies that
there exists a~constant $0<\delta<1$ such that
\begin{equation*}
\varrho_p(\zeta_1,\zeta_2) \geq \delta \cdot \max\Big\{ (1-|\zeta_1|^2)^{\alpha-1}, (1-|\zeta_2|^2)^{\alpha-1} \Big\}
\end{equation*}
whenever $\zeta_1,\zeta_2\in\D$ are points for which $f_1(\zeta_1)=0=f_2(\zeta_2)$.

%%%%%%%%%%%%%%%%%%%%%%%
%%%% ---- SECTION ---- %%%%
%%%%%%%%%%%%%%%%%%%%%%%

\section{Acknowledgements}

The author thanks J.~Heittokangas for helpful conversations, and for pointing out the reference 
\cite[Theorem~8.2.2]{H:1997}.

%%%%%%%%%%%%%%%%%%%%%%%%%%%%%%%%
%%%% ---- BIBLIOGRAPHY ---- %%%%
%%%%%%%%%%%%%%%%%%%%%%%%%%%%%%%%

\end{document}